\newtheorem{theorem}{\bf Theorem}[section]
\newtheorem{lemma}[theorem]{\bf Lemma}
\newtheorem{proposition}[theorem]{\bf Proposition}
\newtheorem{remark}[theorem]{\bf Remark}
\newtheorem{definition}[theorem]{\bf Definition}
\newtheorem{corollary}[theorem]{\bf Corollary}
\newcounter{for}[section]
\newcommand{\be}[1]{\addtocounter{for}{1} \begin{equation}\label{#1}}
\newcommand{\ee}{\end{equation}}
\def\m{{\bf m}}
\def\E{{E}}
\def\P{{\mathbb P}}
\def\R{{\mathbb R}}
\def\Z{{\mathbb Z}}
\def\N{{\mathbb N}}
\def\cL{{\mathcal{L}}}
\def\ka6{6}
\def\ve{{\varepsilon}}
\def\({{\Bigl(}}
\def\){{\Bigr)}}
\def\one{{\mathbf 1}}
\def\square{\ifmmode\sqr\else{$\sqr$}\fi}
\def\sqr{\vcenter{
         \hrule height.1mm
         \hbox{\vrule width.1mm height2.2mm\kern2.18mm\vrule width.1mm}
         \hrule height.1mm}}                  % This is a slimmer sqr.
\def\o{\omega}
\newcommand {\pare}[1] {\left( {#1} \right)}
\theoremstyle{plain}
\theoremstyle{remark}
\def\epr{\end{proof}}
\def\bpr{\begin{proof}}
\def \beq {\begin{eqnarray}}
\def \eeq {\end{eqnarray}}
\def \beqn {\begin{eqnarray*}}
\def \eeqn {\end{eqnarray*}}
\newcommand{\bl}[1]{\begin{lemma}\label{#1}}
\newcommand{\br}[1]{\begin{remark}\label{#1}}
\newcommand{\brs}[1]{\begin{remarks}\label{#1}}
\newcommand{\bt}[1]{\begin{theorem}\label{#1}}
\newcommand{\bd}[1]{\begin{definition}\label{#1}}
\newcommand{\bp}[1]{\begin{proposition}\label{#1}}
\newcommand{\bc}[1]{\begin{corollary}\label{#1}}
\newcommand{\bfact}[1]{\begin{fact}\label{#1}}
\newcommand{\bex}[1]{\begin{example}\label{#1}}
\newcommand{\ec}{\end{corollary}}
\newcommand{\efact}{\end{fact}}
\newcommand{\eex}{\end{example}}
\newcommand{\el}{\end{lemma}}
\newcommand{\er}{\end{remark}}
\newcommand{\ers}{\end{remarks}}
\newcommand{\et}{\end{theorem}}
\newcommand{\ed}{\end{definition}}
\newcommand{\ep}{\end{proposition}}
\newcommand{\bcl}[1]{\begin{claim}\label{#1}}
\newcommand{\ecl}{\end{claim}}
\newcommand{\ecs}{\end{corollary}}
\newcommand{\eers}{\end{exercise}}
\newcommand{\eexs}{\end{example}}
\newcommand{\eems}{\end{example}}
\newcommand{\els}{\end{lemma}}
\newcommand{\eles}{\end{lemmaex}}
\newcommand{\ets}{\end{theorem}}
\newcommand{\eds}{\end{definition}}
\newcommand{\eps}{\end{proposition}}
\newcommand{\bi}{\begin{itemize}}
\newcommand{\ei}{\end{itemize}}
\newcommand{\ben}{\begin{enumerate}}
\newcommand{\een}{\end{enumerate}}
\def\vbar{\mathchoice{\vrule height6.3ptdepth-.5ptwidth.8pt\kern-.8pt}
   {\vrule height6.3ptdepth-.5ptwidth.8pt\kern-.8pt}
   {\vrule height4.1ptdepth-.35ptwidth.6pt\kern-.6pt}
   {\vrule height3.1ptdepth-.25ptwidth.5pt\kern-.5pt}}
\def\fudge{\mathchoice{}{}{\mkern.5mu}{\mkern.8mu}}
\def\bbc#1#2{{\rm \mkern#2mu\vbar\mkern-#2mu#1}}
\def\bbb#1{{\rm I\mkern-3.5mu #1}}
\def\bba#1#2{{\rm #1\mkern-#2mu\fudge #1}}
\def\bb#1{{\count4=`#1 \advance\count4by-64 \ifcase\count4\or\bba A{11.5}\or
   \bbb B\or\bbc C{5}\or\bbb D\or\bbb E\or\bbb F \or\bbc G{5}\or\bbb H\or
   \bbb I\or\bbc J{3}\or\bbb K\or\bbb L \or\bbb M\or\bbb N\or\bbc O{5} \or
   \bbb P\or\bbc Q{5}\or\bbb R\or\bbc S{4.2}\or\bba T{10.5}\or\bbc U{5}\or
%   \bbb P\or\bbc Q{5}\or\bbb R\or\bba S{8}\or\bba T{10.5}\or\bbc U{5}\or
   \bba V{12}\or\bba W{16.5}\or\bba X{11}\or\bba Y{11.7}\or\bba Z{7.5}\fi}}
\def \o {\omega}
\def \D {{\bf {\rm {D}}}}
\def\sqr#1#2{{\vcenter{\vbox{\hrule height .#2pt
                             \hbox{\vrule width .#2pt height#1pt \kern#1pt
                                   \vrule width .#2pt}
                             \hrule height .#2pt}}}}
\def\square{\mathchoice\sqr54\sqr54\sqr{4.1}3\sqr{3.5}3}
\def\pmb#1{\setbox0=\hbox{#1}%
   \kern-.025em\copy0\kern-\wd0
   \kern.05em\copy0\kern-\wd0
   \kern-.025em\raise.0433em\box0 }
\def\sqr#1#2{{\vcenter{\vbox{\hrule height.#2pt
     \hbox{\vrule width.#2pt height#1pt \kern#1pt
   \vrule width.#2pt}\hrule height.#2pt}}}}
\def\N{{\mathbb N}}   
\def\Z{{\mathbb Z}}
\def\R{{\mathbb R}}
\def\P{{\mathbb P}}
\def\cL{{\mathcal L}}
\begin{document}
%opening
\title{Simulation of quasi-stationary
distributions on countable spaces}

\author{Pablo Groisman\thanks{Departamento de Matem\'atica, Fac. Cs. Exactas y
Naturales, Universidad de Buenos Aires and IMAS-CONICET. {\tt
pgroisma@dm.uba.ar}, {\tt http://mate.dm.uba.ar/$\sim$pgroisma.}}
  \ and  Matthieu Jonckheere\thanks{IMAS-CONICET. {\tt mjonckhe@dm.uba.ar},
 {\tt http://matthieujonckheere.blogspot.com}.
} }

\date{}
\pagestyle{myheadings}
\markright{Simulation of QSD.}
%\markboth{Simulation of QSD.}{} 

\maketitle

%{\em Running head}:
%\lhead{Simulation of QSD.}

% {\abstract  
% Consider $N$ particles moving independently, each one according to the
%dynamics
% of a driving Markov jump process in a countable space $\Lambda \cup \{0\}$
%until
% one of them hits the absorbing state $0$. At that time, it
% chooses instantaneously and uniformly the position of one of the other
% particles and takes its place. Then the system starts afresh from this
% position, independently of the past. These type of Fleming-Viot processes have
% been studied in the last decade with different motivations, but with the
%common
% idea that they potentially allow to approximate the distribution of
% the process conditioned to not being absorbed and quasi-stationary measures of
% the corresponding driving process. We review some of the main recent results
%in
% that line of research and examine in more details the question of convergence
% for fixed time intervals and the convergence in distribution of a tagged
% particle.
% }

{\abstract Quasi-stationary distributions (QSD) have been widely studied since
the pioneering work of Kolmogorov (1938), Yaglom (1947) and Sevastyanov (1951). They appear as a natural object when
considering Markov processes that are certainly absorbed since they are
invariant for the evolution of the distribution of the process conditioned on
not being absorbed. 
They hence appropriately describe the state of the process at large
times for non absorbed paths. 
Unlike invariant distributions for Markov processes, QSD are solutions of a non-linear equation 
and there can be 0, 1 or an infinity of them. Also, they cannot be
obtained as Ces\`aro limits of Markovian dynamics. These facts make the
computation of QSDs a nontrivial matter. We review different
approximation methods for QSD that are useful for simulation purposes, mainly focused on Fleming-Viot dynamics. We also give some alternative proofs and
extensions of known results.}

\

{\bf \em Keywords}:
Quasi-stationary distributions. Fleming-Viot processes. Simulation.

{\bf \em AMS 2000 subject classification numbers}: 60K35, 60J25.

\section{Introduction}

Let $Q = (q(x,y),\, x,y \in \Lambda \cup \{0\})$ be the rates matrix of a
non-exploding continuous time Markov process $Z(\cdot, \mu)$, with
initial condition $\mu$ on a
countable state space $\Lambda \cup \{0\}$. We assume that $Q$ is irreducible
in $\Lambda$ and think of $0$ as an absorbing state. We also assume that 
absorption is certain.  As usual we denote
$q(x,x) = -\sum_{y\ne x} q(x,y)$.

Let $\tau^\mu$ be the absorption time defined by $\tau^\mu=\inf\{t>0 \colon
Z(t,\mu)=0\}$ and $T_t \mu$
the evolution of the process $Z(\cdot, \mu)$ conditioned on survival
$$T_t \mu (x):= P(Z(t,\mu) = x\ |\ \tau^\mu>t) = \frac{\sum_{z\in\Lambda}
\mu(z)P_t(z,x)}{1-\sum_{z\in\Lambda}
\mu(z)P_t(z,0)}.$$
Here $P_t=e^{tQ}$ is the semi-group associated with the transitions matrix $Q$.
For $\mu$ such that $\sum_z \mu(z)|q(z,z)| < \infty$, using the Kolmogorov
equations for $Z$ we obtain the Kolmogorov equations for the conditioned
evolution
\be{kolmogorov.cond.evolution}
\begin{split}
 \frac{d}{dt}T_t\mu(x) = & \sum_{y\in \Lambda}
 q(y,x)\,T_t\mu(y)+
 \sum_{y\in \Lambda} q(y,0)\,T_t\mu(y)\,T_t\mu(x), \qquad x \in \Lambda,\\
T_0(\mu)(x)= & \mu(x), \qquad x \in \Lambda.
\end{split}
\ee
In particular, we have the semi-group property $T_{s+t}=T_s(T_t)$. We call
$(T_t\mu)$ the {\em conditioned evolution}.  We can hence deduce from (\ref{kolmogorov.cond.evolution}) that any limit point
of $T_t\mu$ is a fixed point of $\{T_s,\ s\ge 0\}$.

\begin{definition}
For a probability measure $\nu$ in $\Lambda$, we  say that it is a quasi-stationary
distribution (QSD) if and only if for any $t \ge 0$ we have $T_t\nu = \nu$.
\end{definition}

Evaluating (\ref{kolmogorov.cond.evolution}) at $t=0$, we obtain a
non-linear equation for $\nu$, i.e. $\nu$ is a QSD if and only if it verifies
\be{intro-1}
0 = \sum_{y\in \Lambda} q(y,x)\,\nu(y)+
 \sum_{y\in \Lambda} q(y,0)\,\nu(y)\,\nu(x).
\ee
%Henceforth, 
Now, if $\nu$ is a QSD, it follows from the Markov property that there exists
$\theta_\nu>0$ such that
$$P(\tau^\nu >t)= \exp(-\theta_{\nu}  t).$$
Hence, a necessary condition
for the existence of a QSD is the existence of a state $x \in \Lambda$ and
$\theta>0$ such that $E(e^{\theta\tau^x})<\infty$. Here we slightly abuse
notation and write $\tau^x$ instead of $\tau^{\delta_x}$. Ferrari et. al (see below)
showed in \cite{FKMP} that if $\Lambda=\N$ and for every $t>0$, $\lim_{x\to
\infty}P(\tau^x\le t)=0$, then this condition is also sufficient for the
existence of a QSD (but does not imply uniqueness). Unfortunately, the additional
condition $\lim_{x\to
\infty}P(\tau^x\le t)=0$ is not just technical (see \cite[p.9]{VM} for a simple
counter-example). As a consequence, an absorbing
process, even with integrable absorption times, might well have no  QSD. It was also
shown \cite{FMP,seneta-veres, VD} that there are a lot of cases where
an infinite number of QSD do exist. These include for instance birth and death process with
constant or linear rates and sub-critical branching processes.

%For more details on QSD, we refer to \cite{mvQS}
We now describe the dynamics of a process that
is the cornerstone of several methods of both simulation and theoretical analysis
of QSDs.
% has been shown to be useful for the study of QSDs.

\paragraph{The $\mu-$return process.}

Given a probability $\mu$ in $\Lambda$, consider a Markov process $Z^\mu$ with the following dynamics:
it evolves according to the dynamics of $Z$ until it hits $0$, at which point,
it instantaneously takes a new position in $\Lambda$ with distribution $\mu$.
The transitions of this process are then given for $x \neq y$ by:
\be{MarkovQSD}
q^\mu(x,y)=q(x,y) + q(x,0) \mu(y).
\ee

This process appears in the literature since the very beginning of the study of
QSD (\cite{bartlett, DSdt}) but the approach was first discarded, arguing that there
is a very weak relation between the QSD of $Q$ and the invariant distribution
of $Z^\mu$ and that this last one can be changed into almost any distribution by a
suitable choice of $\mu$. 

In \cite{FKMP}, the authors considered the
function that maps $\mu \mapsto\Phi(\mu)$, where $\Phi(\mu)$ is the invariant
distribution of $Z^\mu$. In that work, they realized that it was fruitful to
study iterations of this map and they showed that $\nu$ is a QSD for $Q$ if and
only if it
is a fixed point of $\Phi$. More importantly, they obtain the following
theorem.

\begin{theorem}[Ferrari, Kesten, Mart\'inez, Picco, \cite{FKMP}]
Assume $\Lambda=\N$ and that $P(\tau^x >t) \to 1$ as $x \to \infty$ for
every $t>0$. Then a necessary and sufficient condition for the existence of a
QSD is 
\[
 E(e^{\theta \tau^x}) < \infty, \qquad \text{for some } x \in \N, \,\, \theta >0.
\]
\end{theorem}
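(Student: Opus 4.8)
The plan is to prove both directions of the equivalence separately, with the necessity direction being essentially immediate and the sufficiency direction requiring the real work. For necessity, suppose a QSD $\nu$ exists. As noted just above the theorem, the Markov property forces $P(\tau^\nu > t) = \exp(-\theta_\nu t)$ for some $\theta_\nu > 0$, so $E(e^{\theta \tau^\nu}) < \infty$ for any $\theta < \theta_\nu$. Since $\nu$ is a probability measure on $\N$, we can pick a state $x$ with $\nu(x) > 0$; by irreducibility and a standard comparison (conditioning on the first position sampled from $\nu$), the tail of $\tau^x$ is controlled by that of $\tau^\nu$, giving $E(e^{\theta \tau^x}) < \infty$ for small enough $\theta > 0$. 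So the exponential-moment condition is necessary.

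The substance is the sufficiency direction, and I would prove it by exhibiting a QSD as a limit point obtained through the $\mu$-return process and the map $\Phi$. First I would fix a reference state, say $x_0 = 1$, satisfying $E(e^{\theta \tau^{x_0}}) < \infty$, and consider the iterates of $\Phi$ starting from $\delta_{x_0}$ (or more robustly, the Cesàro/time averages of the conditioned evolution $T_t \delta_{x_0}$). The goal is to produce a limit point and show it is a fixed point of $\{T_s\}$, hence a QSD by definition. The key structural fact, already recorded in the excerpt, is that any limit point of $T_t\mu$ is a fixed point of the semigroup $\{T_s, s \ge 0\}$; so the whole game reduces to (i) extracting a convergent subsequence in an appropriate topology and (ii) ruling out escape of mass to infinity, i.e. showing the limit is a genuine probability measure on $\N$ rather than the zero measure.

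The tightness step in (ii) is where the two hypotheses enter and where I expect the main obstacle to lie. The exponential moment bound $E(e^{\theta \tau^{x_0}}) < \infty$ should be converted into uniform control on the survival probabilities, giving a lower bound on the mass that the time-averaged conditioned evolution places on finite sets; quantitatively, one wants to show that the expected occupation measure cannot leak to infinity faster than the exponential killing rate allows. This is precisely where the second hypothesis $P(\tau^x > t) \to 1$ as $x \to \infty$ does crucial work: it prevents the pathology flagged in the excerpt (the counterexample of \cite{VM}) in which mass escapes to infinity along trajectories that survive for a long time without the absorption structure ever pinning them down. Concretely I would use it to guarantee that a process started far out does not get absorbed quickly, so that the conditioning in $T_t$ does not artificially concentrate mass at infinity; combined with the exponential moment to control the denominator $1 - \sum_z \mu(z) P_t(z,0)$ uniformly away from zero, this yields tightness of the family of candidate measures.

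Once tightness is in hand, a subsequential limit $\nu$ exists and is a probability measure on $\N$; invoking the fixed-point property of limit points of $T_t\mu$ then gives $T_s \nu = \nu$ for all $s \ge 0$, so $\nu$ is a QSD. I would also remark that this argument yields existence but not uniqueness, consistent with the cited results on multiplicity. The delicate point throughout is the interplay between the two hypotheses in establishing tightness; getting clean uniform estimates on both the numerator (survival from far away) and the denominator (overall survival probability) of $T_t \mu(x)$, rather than merely pointwise ones, is the technical heart of the proof.
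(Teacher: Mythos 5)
First, a point of reference: the paper itself gives no proof of this statement --- it is quoted from Ferrari--Kesten--Mart\'inez--Picco \cite{FKMP}, and the only indication of method is the single sentence that ``iterations of the map $\Phi^n(\mu)$ are expected to converge to a QSD.'' Your necessity argument is correct and can even be simplified: from $P(\tau^\nu>t)=\sum_z\nu(z)P(\tau^z>t)\ge\nu(x)P(\tau^x>t)$ for any $x$ with $\nu(x)>0$ one gets $P(\tau^x>t)\le e^{-\theta_\nu t}/\nu(x)$ directly, with no appeal to irreducibility.

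The sufficiency direction, however, is where the entire content of the theorem lives, and there your proposal is a plan rather than a proof. Two essential steps are named but not executed. (i) \emph{Tightness.} You say the exponential moment of $\tau^{x_0}$ ``should be converted into uniform control'' on the mass of the iterates, but you give no mechanism for propagating that bound. The actual argument in \cite{FKMP} hinges on a quantitative lemma showing that the exponential moments $E_{\Phi^n(\delta_{x_0})}(e^{\theta\tau})$ remain bounded uniformly in $n$ (for a suitably shrunk $\theta$), which is what rules out escape of mass; without such a uniform-in-$n$ (or uniform-in-$t$) estimate, a pointwise exponential moment at a single starting state says nothing about the later iterates, and tightness simply does not follow. (ii) \emph{Identification of the limit.} The assertion that ``any limit point of $T_t\mu$ is a fixed point of $\{T_s\}$'' is stated in the paper for solutions of \eqref{kolmogorov.cond.evolution}, but passing it through a weak subsequential limit on an infinite state space requires continuity of the (nonlinear) map $\mu\mapsto T_s\mu$, equivalently of $\Phi$, along the subsequence --- the denominator $1-\sum_z\mu_n(z)P_t(z,0)$ must be controlled uniformly. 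This is precisely where the hypothesis $P(\tau^x>t)\to 1$ enters, and it is a separate estimate from tightness, not a consequence of it. You correctly locate where both hypotheses must be used, but since neither estimate is actually derived, the proposal does not establish the sufficiency half of the theorem. A further minor confusion: iterating $\Phi$ (whose fixed points are QSDs via the normalized expected occupation measure of the $\mu$-return process) and taking Ces\`aro limits of the conditioned evolution $T_t\delta_{x_0}$ are genuinely different schemes with different continuity requirements; treating them as interchangeable obscures which estimates are needed.
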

Their proof strategy is based on the fact that iterations of the map
$\Phi^n(\mu)$ are expected to converge to a QSD. 

Similar approaches were then exploited by different authors and with different goals.

% The invariant measure $\pi^{\mu}$ of $Z^\mu$ hence solves
% $$ \sum_{y \in \Lambda}\pi^\mu(y)q(x,y)+ q(x,0)\pi^\mu(x)\mu(y)=0,$$
% which in turn implies that if $\mu$ is chosen equal to $\nu$,
% then the process $\tilde Z=Z^\nu$ has the stationary measure$\pi^\nu=\nu.$ This observation has far reaching consequences:

\begin{itemize}
\item The analysis of the dynamics of $Z^\mu$ is the key elements for the proof of
existence and uniqueness of QSD under Doeblin conditions given by Jacka and Roberts \cite{JR95}.

\item Asselah and Castell \cite{AC} use the functional $\Phi$ to prove existence of
QSD for asymmetric conservative attractive particle systems in $\Z^d$. They work
with Ces\`aro means of the iterations $\Phi^n(\mu)$ rather than with the
sequence itself. They prove that limit points of the Ces\`aro means are QSD and
they also used this fact to prove properties (regularity) of the QSD relying on
a priori bounds.

\item Aldous, Flannery and Palacios use also this idea to construct a method to
simulate the QSD in finite spaces \cite{AFP}. We give more details in Section
\ref{sec.om}.

\item Fleming-Viot dynamics are also linked to the dynamics of $Z^\mu$. In fact, the distribution of a tagged particle of a stationary Fleming-Viot system with $N$ particles follows the dynamics $Z^\mu$, with $\mu$ being the empirical distribution of the process (in this case $\mu$ depends on time and is random and correlated with the tagged particle). When FV is in equilibrium, the tagged particle is expected to converge to $Z^{\nu}$ when $N$ goes to infinity, where $\nu$ is a QSD. In Section \ref{sec.tagged} we prove this fact for some models.

\item Also \cite{A, BP} deal with this map to approximate QSDs.
\end{itemize}

As it can be seen, the $\mu$-return process has motivated a lot of work in the
field of QSDs in general and in particular in the problem of their simulation, which is non trivial
 since they can not be simulated directly, using  for example the
rejection method.
% a considerable effort has been
%dedicated to approximate them. 
In the sequel, we first review some results obtained in that direction in
recent years. We mainly focus on
Fleming-Viot (FV) dynamics, described in Section
\ref{flemingviot},
as they constitute not only a method to practically
simulate the QSD and a theoretical tool to study QSDs, but also an interesting particle system raising many stimulating questions.

We then provide a new proof for the convergence of Fleming-Viot dynamics
to the conditioned evolution in compact time intervals and study the limiting
behavior of a FV tagged particle. 
In Section \ref{sec.om}, we explain the principles of two alternative methods for the computation
of the QSD.
Section \ref{relatedproblems} deals with related open problems.

An updated bibliography on QSDs
can be found at 

\url{http://www.maths.uq.edu.au/~pkp/papers/qsds/qsds.pdf}.

%
%In view of (\ref{intro-1}), if $\mu$ is chosen to be
%LINK WITH THE MARKOV PROCESS WITH INVARIANT MEASURE THE QSD.
%all methods are inspired of this construction.

\section{Fleming-Viot dynamics}

\label{flemingviot}
We define the {\em Fleming-Viot} process driven by $Z$ (or $Q$) as
follows. It consists of $N$ particles moving independently of the others as a
continuous time Markov process with rates $Q$, until
one attempts to jump to state 0. At that time, it immediately jumps
back to $\Lambda$ by choosing the position of one of the other particles chosen
uniformly at random and starts afresh from this configuration.
This process is Markov with state space $\Lambda^N$ (here $N$ is the
number of particles) and its state at time $t\ge 0$ is denoted $\xi(t,\cdot)$.
The variable $\xi(t,i)$ denotes the position of the $i$-th particle at time $t$.
The generator $\cL^N$ acts on functions $f:\Lambda^{N}\to\R$ as follows
\be{generator}
\cL^N f(\xi)= \sum_{i=1}^N \sum_{x\in\Lambda}
\Bigl[q(\xi(i),x) + q(\xi(i),0)\,
\frac{\sum_{j\not=i}^N \one_{\{\xi(j)=x\}}}{N-1}\Bigr]
\big(f(\xi^{i,x}) - f(\xi)\big),
\ee
where $\xi^{i,x}(i)=x$, and for $j\not=i$, $\xi^{i,x}(j) = \xi(j)$. We use $\{S^N_t, t\ge0\}$ for the semi-group of FV, i.e. 
\[
 S^N_t f (\xi) :=  E(f(\xi(t))|\xi(0)=\xi)
\]

The well definiteness of the process (given that the driving process is well defined) is immediate. Nevertheless we refer the reader to \cite{ afg, afgj,FM} or Section \ref{sec.tagged} for (several) graphical constructions that are useful to infer different properties of the process.

These dynamics were originally introduced by Burdzy, Ingemar, Holyst and March
in the context of Brownian motion in a bounded domain in \cite{BIHM}, where
it was conjectured that the empirical measure should converge to a deterministic
solution of a PDE that represents the distribution of a Brownian particle
conditioned on non-absorption. 

As it is described here, the process is a variation of the original measure-valued process introduced by Fleming
and Viot \cite{fv} (see also
\cite{E}). Many alternative dynamics have been later proposed and studied. Their
common feature is that all the particles evolve independently according to a
common driving process until a {\it selection mechanism} takes place, replacing
particles having specific positions or potentials. In Moran systems for example,
particles branch according to a state-dependent rate and are replaced according
to a fitness functional of the positions. Brunet, Derrida and coauthors
\cite{brunet,
brunet2, brunet-derrida2, brunet-derrida} on the other hand, introduced a system where particles evolve driven by
one dimensional Brownian Motion and branch with a fixed rate, at which time the
left-most particle is instantaneously eliminated. Although these systems are simpler to analyze than FV (in fact much finner results have been obtained for this model \cite{berestycki,maillard}), they can only be defined for random walks in $\Z_+$ as driving process.

Such models appear actually at the corner of several scientific fields.
They were originally introduced in genetics to study the propagation of genes in
a population.
In a completely different context, they may represent the evolution of particles
in a medium with an obstacle, and are linked with traveling waves equations in
physics \cite{brunet}. For more details, refer to \cite{delmoral} and the
references therein.

Coming back to FV, a generic question is hence to understand for a given driving Markov process,
how the empirical density of the FV process at fixed times or under its
stationary regime (if it exists) approximates the conditioned
evolution or the QSD of the driving process respectively.
This line of research was pursued by several authors and the conjecture was
settled for many cases of interest where the driving process {\it has a unique
QSD}, like for jump processes with finite state spaces \cite{afg}, chains with
Doeblin conditions \cite{FM} and diffusion processes on bounded
domains \cite{bieniek, BHM, GK, GK1}. 

An important and challenging problem then consists in studying the FV process
when the driving process has an infinite number of QSDs. This question was
recently answered for the case of a driving process given by sub-critical
branching. In  \cite{afgj} it is shown that FV driven by this process is ergodic
and that the empirical distribution of particles in equilibrium converges to the
{\em minimal QSD} in the following sense.
\begin{definition}
A  QSD $\nu^*$ is called  {\it minimal} if
\[
E(\tau^{\nu^*}) = \inf \{E(\tau^{\nu}) \colon \nu \text{ verifies
\eqref{intro-1}}\}.
\]
\end{definition}
This phenomenon is called {\em selection principle}, in the sense that FV ``selects'' the minimal QSD among all the QSD. This fact also gives evidence in favor of the minimal QSD to be interpreted as the one that represents the (quasi) stationary behavior of the process before absorption takes place.

Denote for $x\in \Lambda$ and $\xi\in\Lambda^N$
\be{empiric}
m(x,\xi) := \frac1N\sum_{i=1}^N \one\{\xi(i)=x\} \ee the proportion
of $\xi$-particles at position~$x$. We also think of $m(\cdot, \xi)$ as a
probability measure in $\Lambda$. With a slight abuse of notation, for a measure $\gamma^N$ in $\Lambda^N$ we write $m(x,\gamma^N)$ for the random measure $m(x,\xi)$ where $\xi$ is a random configuration of particles with distribution $\gamma^N$.  
Applying the generator of FV to the empirical distribution, we get
\be{generator-m} {\cal L}^N m(y,\cdot)(\xi)= 
\sum_{x\in\Lambda}  q(x,y) m(x,\xi) + {\frac{N}{N-1}} \sum_{x\in\Lambda} q(x,0)\,m(x,\xi)
m(y,\xi),
 \ee
which looks very similar to \eqref{kolmogorov.cond.evolution}.
This provides another convincing heuristic
for the approximation of the conditioned evolution and QSD by FV.

It may be enlightening at this point to emphasize one of the main difficulties
encountered when
studying Fleming-Viot dynamics. Contrary to all the other mentioned genetic
interacting jumps particle models, the FV process is not attractive: 
natural partial orders are not preserved by trajectories of the processes
started from ordered initial positions. This complicates drastically the analysis and impedes the
construction of simple coupling and stochastic comparisons for proving
convergence properties. Another
consistent difficulty arises because of the stochastic
dependence between the times at which the selection mechanism takes place and
the trajectories of the particles. As a consequence,  the well definiteness of the process for all times when it is driven by diffusions is a difficult problem\footnote{ In countable spaces however,
this follows immediately but this fact illustrates the possible complications
that can arise with this dynamics.
} \cite{bieniek,  BBS, GK}.
These difficulties can also be thought as somehow linked to the critical nature of those dynamics where the population size is kept constant. 

In the rest of this section, we aim at discussing:
%at reviewing some of the main results and establishing concerning:
\begin{enumerate}
\item the convergence of the empirical distribution for fixed time intervals,

\item the ergodicity and convergence of the empirical distribution under the
stationary measure of FV,

\item the behavior of a tagged particle.

\end{enumerate}
%In some cases, we provide new proofs and/or extensions of the known results: 
We provide a new proof of convergence with order $1/\sqrt{N}$ for the first
item and establish via a simple coupling argument the limiting distribution of a tagged
particle. We also review the recent results obtained for long-time behaviors of the
empirical distribution.

%CHANNNGE
%The paper is organised as follows.
%In Section \ref{sec:qs}, we remind the classical notions of conditioned evolutions and QSD for Markov jump processes.
%Section \ref{sec:condE} is dedicated to the study of the convergence of the
% empirical distribution on compact time intervals. Section \ref{sec:TP} deals
%with the limiting distribution of the tagged particle. A graphical construction
%(different of the previous ones) and a coupling are built here.
%In Section \ref{sec:Erg}, we  review the recent results describing  long-time behavior of the FV process.

\subsection{Convergence to the conditioned evolution}\label{sec:condE}

In this section, we investigate the convergence of the empirical distribution for fixed time intervals. 
In \cite{V}, Villemonais proves a general result of convergence to the conditioned evolution with a rate of order $1/\sqrt{N}$. The proof in \cite{V} is based on martingale arguments under the assumption that the number of jumps of the FV process does not explode in compact time intervals.
We here propose a new proof (leading to the same speed of convergence) working
under quite general technical conditions on the transition rates and based
on different techniques. It uses in particular the method of supersolutions.

%It is also worth to mention the results obtained by Villemonais in \cite{Vil} were a general result of convergence to the contidioned evolution is proved. The rate obtained in that paper es $1/\sqrt{N}$ but there is no examples of processes converging with such a rate. It would be interesting to know if the rate $1/N$ can be obtained with such generality or if there exist some patological examples were the rate $1/N$ fails. 

%\subsection{Convergence rates for compact time intervals}

%We prove that the empirical measure of FV converges to the conditioned evolution
%in compact time intervals. The proof is based on the method of super solutions

\begin{theorem}
\label{thm-conv}Assume $\sum_{y\in \Lambda} q(y,x) \le C$ for all $x\in
\Lambda \cup \{0\}$. If $-\sum_z \mu(z)q(z,z) <\infty$, then 
\be{conv-cond-ev2}
\sup_{0\le t \le T}    E \left|  m(x,\xi(t)) -
T_t\mu(x)) \right |  \le e^{5C T} \left(\frac{3}{\sqrt N}
+ \|\mu - m(\xi(0))\|_{TV} \right)
\ee
\end{theorem}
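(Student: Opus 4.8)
The plan is to compare the two evolutions by tracking the quantity $u(x,t) := E|m(x,\xi(t)) - T_t\mu(x)|$, or rather a smoothed/signed version of the discrepancy, and to show it satisfies a differential inequality of Gronwall type whose constant is governed by $C$. The starting point is the generator computation \eqref{generator-m}, which shows that $m(\cdot,\xi(t))$ obeys almost the same evolution as the conditioned dynamics \eqref{kolmogorov.cond.evolution}, differing only in (i) the factor $N/(N-1)$ multiplying the quadratic term, and (ii) the presence of a martingale fluctuation term coming from the fact that $m(\cdot,\xi(t))$ is a random measure rather than a deterministic solution. The factor $N/(N-1) = 1 + O(1/N)$ is the source of one $1/\sqrt N$-type error (actually an $O(1/N)$ contribution here), while the martingale part, whose quadratic variation is controlled by the jump rates and scales like $1/N$ by the usual independence-of-particles heuristic, is the source of the dominant $1/\sqrt N$ term appearing in \eqref{conv-cond-ev2}.

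Concretely, I would first write Duhamel/Dynkin formulas for both $m(x,\xi(t))$ and $T_t\mu(x)$ against the linear part of the operator, namely the sub-Markovian semigroup generated by $\sum_x q(x,y)(\cdot)$ restricted to $\Lambda$; the hypothesis $\sum_{y}q(y,x)\le C$ guarantees this linear flow is a bounded, mass-losing operator, so its action expands the $L^1(\Lambda)$-norm of a signed measure by at most a factor $e^{CT}$ over $[0,T]$. Subtracting the two representations, the difference $m(x,\xi(t)) - T_t\mu(x)$ is driven by three sources: the propagated initial discrepancy $\|\mu - m(\xi(0))\|_{TV}$, the difference of the two quadratic nonlinearities, and the FV martingale. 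The nonlinear terms are Lipschitz in total variation on the simplex of probability measures — here one uses that both $m(\cdot,\xi)$ and $T_t\mu$ are probabilities and that $q(\cdot,0)\le C$ — so their contribution to $E|m - T_t\mu|$ is bounded by a constant times $\int_0^t E\|m(\xi(s)) - T_s\mu\|_{TV}\,ds$, feeding the Gronwall loop. The author's mention of \emph{supersolutions} suggests that rather than estimating the martingale directly one constructs an explicit supersolution $e^{5Cs}(3/\sqrt N + \|\mu - m(\xi(0))\|_{TV})$ and verifies, via the generator, that it dominates $E\|m(\xi(s)) - T_s\mu\|_{TV}$; this packages the Gronwall argument and the constant $5C$ in one comparison step.

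The main obstacle, and the step I would devote the most care to, is controlling the martingale fluctuation — equivalently, quantifying how far the random measure $m(\cdot,\xi(t))$ deviates from its own expected (mean-field) evolution. The difficulty is precisely the one flagged in the text: FV is not attractive, the selection times are correlated with the particle trajectories, and the $N-1$ denominator couples all particles, so one cannot naively treat the particles as independent. The clean way around this is to bound the predictable quadratic variation of $M_t(x) := m(x,\xi(t)) - m(x,\xi(0)) - \int_0^t \cL^N m(x,\cdot)(\xi(s))\,ds$ using the explicit jump structure in \eqref{generator}: each jump changes $m(x,\cdot)$ by $\pm 1/N$, and the total jump rate is $O(CN)$, so $E\langle M(x)\rangle_t = O(Ct/N)$ and hence $E|M_t(x)| = O(\sqrt{Ct}/\sqrt N)$ by Cauchy–Schwarz. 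Summing (or controlling uniformly in $x$) this $1/\sqrt N$ bound and inserting it as the forcing term in the comparison gives the $3/\sqrt N$ in \eqref{conv-cond-ev2}. I would expect the technical heart of the proof to lie in making this fluctuation estimate uniform in $t\in[0,T]$ — via Doob's inequality on the supremum — while keeping the dependence on the rates expressed purely through the single bound $C$, so that the final constant is exactly $e^{5CT}$.
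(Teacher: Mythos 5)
Your overall architecture (Duhamel plus a Gronwall/supersolution comparison, with a separate control of the stochastic fluctuation of $m(\cdot,\xi(t))$ around its mean-field evolution) is reasonable, and is in fact close to the martingale proof of Villemonais \cite{V} that the paper cites as the existing argument it deliberately departs from. The genuine gap is in your key fluctuation estimate. You bound the predictable quadratic variation of $M_t(x)$ by arguing that each jump changes $m(x,\cdot)$ by $\pm 1/N$ and that the total jump rate is $O(CN)$. But the hypothesis of the theorem is $\sum_{y\in\Lambda}q(y,x)\le C$, a bound on the \emph{incoming} (column) rates; the Remark following the theorem states explicitly that neither uniformly bounded rates nor $\sum_{y}q(x,y)\le C$ is assumed. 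The jumps that decrement $m(x,\cdot)$ occur at total rate of order $N\,m(x,\xi)\,|q(x,x)|$, which is not $O(CN)$; your estimate therefore yields $E\langle M(x)\rangle_t = O\bigl((C+|q(x,x)|)\,t/N\bigr)$ and a fluctuation of order $\sqrt{|q(x,x)|\,t/N}$, which is neither uniform in $x$ nor expressible through $C$ alone, whereas the right-hand side of \eqref{conv-cond-ev2} is. This is exactly the difficulty the paper's proof is built to avoid: it replaces the quadratic-variation bound by the decorrelation estimate of Proposition~\ref{prop-chaos} (taken with $y=x$ it gives $\mathrm{Var}\bigl(m(x,\xi(t))\bigr)\le 2e^{2C_0t}/N$ with $C_0=\sup_x q(x,0)\le C$), and it compares the \emph{deterministic} functions $u(t,x)=S^N_t m(x,\cdot)(\xi)$ and $v(t,x)=T_t\mu(x)$ via the supersolution Lemma~\ref{comparison}, in which only the column sums $\sum_z q(z,x)$ and $\sum_z q(z,0)$ ever appear because the supersolution is constant in $x$.

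A second, related problem is where you insert the fluctuation. You propose to feed it as a forcing term into a Gronwall loop for $E\|m(\xi(s))-T_s\mu\|_{TV}$; on a countable $\Lambda$ a pointwise $O(1/\sqrt N)$ forcing does not sum to an $O(1/\sqrt N)$ total-variation forcing ($\sum_x N^{-1/2}=\infty$), so the loop as written does not close. The paper never puts the fluctuation inside the iteration: the comparison argument is run entirely on $u$ and $v$, where the only $1/N$ error is the correlation remainder $R$ of \eqref{def-R} (again controlled by Proposition~\ref{prop-chaos}), and the single-site standard deviation $\sqrt{2/N}\,e^{C_0t}$ is added only once, at the end, by the triangle inequality. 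If you assume uniformly bounded transition rates your argument can be repaired and essentially reproduces the martingale proof of \cite{V}; under the hypotheses actually stated in the theorem it does not establish \eqref{conv-cond-ev2}.
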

\begin{remark}
Note that the technical condition $\sum_{y\in \Lambda} q(y,x) \le C$ is valid
for a wide class of processes. In particular, we do not assume that rates are
uniformly bounded nor the condition $\sum_{y\in \Lambda} q(x,y) \le C$.
\end{remark}

The proof of Theorem \ref{thm-conv} is based on the following propositions. The
first one is a useful decorrelation result proved in \cite{afg} and the second
one proves the closeness between the semigroups associated with the conditional
evolution of the driving process and the dynamics of the empirical measure of
the FV process.

\begin{proposition}[Proposition 2 of \cite{afg}]\label{prop-chaos}
Let $C_0:= \sup_{x \in \Lambda} q(x,0)$. For each $t>0$, and any
$x,y\in\Lambda$ 
\be{correlations}
\sup_{\xi\in\Lambda^N}\big|S_t^N (m(x,\cdot)m(y,\cdot))(\xi)-
S_t^Nm(x,\cdot)(\xi)\,S_t^N m(y,\cdot)(\xi)]\big|\le \frac{2e^{2C_0t}}{N}.
\ee
\end{proposition}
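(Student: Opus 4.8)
My plan is to recognize the left-hand side of \eqref{correlations} for what it is: since both terms start the Fleming-Viot process from the \emph{same} deterministic configuration $\xi$, the quantity to be bounded is exactly a covariance under the law $P_\xi$ of the process started at $\xi$,
\[
S_t^N(m(x,\cdot)m(y,\cdot))(\xi)-S_t^Nm(x,\cdot)(\xi)\,S_t^Nm(y,\cdot)(\xi)=\mathrm{Cov}_\xi\big(m(x,\xi(t)),\,m(y,\xi(t))\big).
\]
Writing $m(x,\xi(t))=\frac1N\sum_i\one\{\xi(t,i)=x\}$ and expanding, this equals
\[
\frac1{N^2}\sum_{i,j}\mathrm{Cov}_\xi\big(\one\{\xi(t,i)=x\},\one\{\xi(t,j)=y\}\big).
\]
The $N$ diagonal terms ($i=j$) are each at most $1$ in absolute value and contribute at most $1/N$, so the whole problem reduces to controlling the correlation of two \emph{distinct} tagged particles, uniformly in the fixed initial configuration.

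For the off-diagonal terms I would use one of the graphical constructions of FV cited after \eqref{generator}: attach to each particle an independent free-jump clock (driven by $Q$ on $\Lambda$) and an independent rescue clock, and at a rescue let the particle adopt the current position of a uniformly chosen one of the other $N-1$ particles. Since the rescue rate of a particle at $z$ is $q(z,0)\le C_0$, each rescue clock is dominated by a rate-$C_0$ Poisson process after thinning. Fix labels $i\ne j$ and trace \emph{backward} from time $t$ the ancestry of each: because a rescue \emph{replaces} a position by a copy of a \emph{single} other particle, the ancestry of a tagged particle is a single path $\mathcal A_i(\cdot)$ that follows one particle, jumps to another at each backward rescue, and otherwise moves by free motion — there is no branching. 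The decisive deterministic observation is that $\xi(t,i)$ and $\xi(t,j)$ are functions of the (deterministic) initial data together with the clocks and uniform choices along their respective ancestral paths; hence, on the event that the two paths never sit on the same particle simultaneously, they depend on disjoint, and therefore independent, families of driving variables, so they are independent. This gives
\[
\big|\mathrm{Cov}_\xi(\one\{\xi(t,i)=x\},\one\{\xi(t,j)=y\})\big|\le P_\xi(\mathcal A_i\text{ and }\mathcal A_j\text{ meet on }[0,t]).
\]

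It then remains to bound the meeting probability uniformly in $\xi$. Before meeting, the two paths occupy two distinct particles; each jumps at rate at most $C_0$, and at a jump lands on a uniformly chosen particle among the other $N-1$, which is the one currently carrying the other path with probability $1/(N-1)$. Hence the paths meet at instantaneous rate at most $2C_0/(N-1)$, so $P_\xi(\mathcal A_i,\mathcal A_j\text{ meet on }[0,t])\le 2C_0 t/(N-1)$. Collecting the diagonal ($\le 1/N$) and the $N(N-1)$ off-diagonal contributions yields the bound $(1+2C_0t)/N$, and since $1+2C_0t\le 2e^{2C_0t}$ for all $t\ge0$ this gives the stated $2e^{2C_0t}/N$ (indeed the genealogical route produces the stronger, linear-in-$t$, estimate).

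The main obstacle I expect is making the independence claim rigorous, and it is precisely here that the genealogical viewpoint earns its keep. One must set up the graphical construction so that the law of each tagged particle at time $t$ is genuinely measurable with respect to the driving data along its ancestral line, and argue that two such data sets are independent as long as the lines stay disjoint. This bypasses any monotonicity or coupling of the \emph{full} configuration — which is unavailable because FV is not attractive — and isolates the single source of correlation, namely the event that one tagged particle's lineage ever copies the other's. It also explains the shape of the bound: free motion alone never makes two lineages meet, so only the rescue rate $C_0$ enters, and the two lineages are responsible for the factor $2$.
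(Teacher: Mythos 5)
Your strategy is correct, but it is a genuinely different route from the one the paper relies on: Proposition \ref{prop-chaos} is imported from \cite{afg}, where the bound is obtained analytically, not genealogically. There one applies the generator to the product $m(x,\cdot)m(y,\cdot)$, notes that the cross (carr\'e-du-champ) terms are $O(1/N)$ because each jump displaces a single particle and changes each coordinate of $m$ by at most $1/N$, and closes a Gronwall inequality for the correlation function --- this is exactly where the factor $2e^{2C_0t}$ comes from, and it is why the extension to unbounded rates is routine, as the paper remarks. Your duality argument instead reduces everything to the coalescence probability of two ancestral lineages and produces the linear-in-$t$ bound $(1+cC_0t)/N$, which is stronger for large $t$, isolates the single source of correlation (only the voter clocks, at rate $C_0$, can merge lineages; the internal dynamics never do), and sidesteps the non-attractivity of FV exactly as you say.

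Two points need repair before your sketch is a proof. First, the decisive independence claim is not valid as literally stated: ``on the event that the lineages are disjoint, they use disjoint data, hence are independent'' is a conditioning on a random event that is measurable with respect to \emph{both} data sets. The standard fix is an exploration/fresh-noise coupling: reveal the data along lineage $i$ first; build a copy $\tilde Y$ of $\xi(t,j)$ that reads fresh independent noise whenever lineage $j$ would consume data already revealed. Then $\tilde Y$ is independent of $\xi(t,i)$ and agrees with $\xi(t,j)$ off the meeting event, giving $\big|\mathrm{Cov}\big(\one\{\xi(t,i)=x\},\one\{\xi(t,j)=y\}\big)\big|\le 2\,\P(\text{meet})$, i.e.\ twice your bound --- the constants still close, since $1+4C_0t\le 2e^{2C_0t}$. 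Second, you must commit to the uniform-label version of the rescue mechanism (choose a label $j\neq i$ uniformly and copy $\xi(t,j)$), as you do informally; the construction actually written in Section \ref{graphical.construcion} resolves a rescue through the partition $J_i(t,y)$ built from the empirical measure of the \emph{other $N-1$ particles}, so there the mark $v$ reads the whole configuration and disjoint lineages would not use disjoint data. With these two amendments your argument is sound and, unlike the cited proof, needs nothing about the internal rates beyond well-definedness of the driving process: only $C_0<\infty$ enters, matching the generality of the statement.
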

\begin{remark}
 Proposition \ref{prop-chaos} is proved in \cite{afg} in fact for processes
with bounded rates, but the proof can be extended to this case with no
difficulty.
\end{remark}

\begin{proposition} \label{prop-semigroups} Assume that $\sum_{y\in \Lambda} q(y,x) \le C$ for all $x\in \Lambda\cup\{0\}$. If $-\sum_z \mu(z)q(z,z) <\infty$, then 
\be{conv-cond-semigroups}
\sup_{0\le t \le T}   \left | E m(x,\xi(t)) -
T_t\mu(x) \right |  \le e^{5CT} \left(\frac{1}{N} +  \|\mu -
m(\xi(0))\|_{TV} \right)
\ee

\end{proposition}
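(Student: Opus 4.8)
The plan is to derive a closed evolution equation for the mean empirical measure $u(t,x):=E\,m(x,\xi(t))=S^N_t m(x,\cdot)(\xi(0))$, compare it term by term with the conditioned evolution $v(t,x):=T_t\mu(x)$, and run a Gronwall estimate on the total--variation discrepancy $\phi(t):=\sum_{x\in\Lambda}|u(t,x)-v(t,x)|=\|u(t,\cdot)-v(t,\cdot)\|_{TV}$. Since $\frac{d}{dt}S^N_t f=S^N_t\cL^N f$, applying $S^N_t$ to \eqref{generator-m} yields
\[
\frac{d}{dt}u(t,x)=\sum_{y}q(y,x)\,u(t,y)+\frac{N}{N-1}\sum_{y}q(y,0)\,\Gamma(t,y,x),\qquad \Gamma(t,y,x):=E\big[m(y,\xi(t))\,m(x,\xi(t))\big].
\]
The hypothesis $\sum_y q(y,x)\le C$ (in particular $\sum_y q(y,0)\le C$ for $x=0$) together with the moment assumption on $\mu$ is what legitimates interchanging $S^N_t$ with the infinite sums; equation \eqref{kolmogorov.cond.evolution} gives the analogue for $v$, with $\Gamma(t,y,x)$ replaced by $v(t,y)v(t,x)$ and without the factor $N/(N-1)$.

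Subtracting and writing $w:=u-v$, the right--hand side splits into a transport part $\A w(t,x):=\sum_y q(y,x)w(t,y)$, a linearised selection term $L(t,x)=w(t,x)\sum_y q(y,0)u(t,y)+v(t,x)\sum_y q(y,0)w(t,y)$ (obtained from the identity $u_yu_x-v_yv_x=u_yw_x+v_xw_y$), and two errors: the finite--population defect $E_2(t,x)=\big(\tfrac{N}{N-1}-1\big)\sum_y q(y,0)u(t,y)u(t,x)$ and the decorrelation defect $E_1(t,x)=\tfrac{N}{N-1}\sum_y q(y,0)\big(\Gamma(t,y,x)-u(t,y)u(t,x)\big)$, the latter being a $q(\cdot,0)$--weighted sum of the covariances $\mathrm{Cov}(m(y,\xi_t),m(x,\xi_t))$ bounded in Proposition \ref{prop-chaos}.

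Next I would differentiate $\phi$ and pair with $\mathrm{sgn}(w(t,x))$. The transport part is dissipative: using $q(x,x)\le0$ and $q(y,x)\ge0$ for $y\ne x$, the off--diagonal gain $\sum_{y\ne x}q(y,x)|w(t,y)|$ is exactly cancelled by the diagonal loss $\sum_x q(x,x)|w(t,x)|$, so $\sum_x \mathrm{sgn}(w(t,x))\,\A w(t,x)\le 0$ (this step uses $\sum_y|w(t,y)||q(y,y)|<\infty$). The linearised term is controlled by $\sum_y q(y,0)u(t,y)\le C_0\le C$ and $\sum_y q(y,0)|w(t,y)|\le C_0\phi(t)$, contributing a constant multiple of $C\,\phi(t)$, while $\sum_x|E_2(t,x)|\le C/(N-1)$. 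The crux is the decorrelation error: summing the per--pair bound of Proposition \ref{prop-chaos} over the infinitely many states $x$ diverges, so instead I would keep the sign bundled into a single bounded functional, writing $\sum_x \mathrm{sgn}(w(t,x))\,E_1(t,x)=\tfrac{N}{N-1}\,\mathrm{Cov}\big(R(\xi_t),M_s(\xi_t)\big)$ with $R(\xi)=\sum_y q(y,0)m(y,\xi)\le C_0$ and $M_s(\xi)=\sum_x \mathrm{sgn}(w(t,x))\,m(x,\xi)$, $|M_s|\le 1$. These are empirical averages of two uniformly bounded per--particle functions, and the proof of Proposition \ref{prop-chaos}, applied to bounded functions rather than indicators, gives $|\mathrm{Cov}(R(\xi_t),M_s(\xi_t))|\le \mathrm{const}\cdot C_0\,e^{2C_0t}/N$. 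Collecting everything yields $\frac{d}{dt}\phi(t)\le 5C\,\phi(t)+\mathrm{const}/N$, and Gronwall from $\phi(0)=\|\mu-m(\xi(0))\|_{TV}$ gives \eqref{conv-cond-semigroups} once one bounds the single coordinate by $|w(t,x)|\le\phi(t)$.

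The two main obstacles both stem from the infinite state space. First, one must upgrade the decorrelation estimate from indicators to the bounded functions $q(\cdot,0)$ and $\mathrm{sgn}(w(t,\cdot))$: the per--pair statement of Proposition \ref{prop-chaos} is not summable over $x$, so the bundling of the sign into the single bounded test function $M_s$ is essential and is what converts the error into an $O(1/N)$ bound in total variation. Second, one must justify the interchange of $S^N_t$ with the infinite sums and the dissipativity manipulation for $\A$, i.e. propagate the first--moment bounds $\sum_y u(t,y)|q(y,y)|<\infty$ and $\sum_y v(t,y)|q(y,y)|<\infty$ forward from $t=0$; the hypothesis $\sum_y q(y,x)\le C$ is precisely the tool that keeps these moments finite on $[0,T]$.
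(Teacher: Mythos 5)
Your derivation of the evolution equation for $u(t,x)=S^N_t m(x,\cdot)(\xi)$, the decomposition of $u-v$ into transport, linearised selection and two defect terms, and the dissipativity computation for the transport part are all sound, and the paper's proof starts from exactly the same equations \eqref{dynamic-v}--\eqref{def-R}. Where you genuinely diverge is in how the equation for $\delta=u-v$ is closed: you run a Gronwall estimate on the total-variation quantity $\phi(t)=\sum_x|\delta(t,x)|$ by pairing with $\mathrm{sgn}(\delta)$, whereas the paper works pointwise in $x$, exhibiting the constant-in-$x$ supersolution $\bar\delta(t)=C_N e^{5Ct}$ and invoking the comparison Lemma \ref{comparison} (a maximum-principle argument at a first touching point). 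The payoff of the paper's choice is that the error term $R(\xi;x,t)$ of \eqref{def-R} is only ever summed over the source variable $z$ with weights $q(z,0)$, and $\sum_z q(z,0)\le C$ by the hypothesis applied at $x=0$; the per-pair bound of Proposition \ref{prop-chaos} is then used exactly as stated. Your route, if completed, would yield the stronger total-variation version of \eqref{conv-cond-semigroups}, but at a price.

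The price is the step you yourself flag as the crux, and it is a genuine gap rather than a routine verification: you need $\big|\mathrm{Cov}\big(\sum_y q(y,0)m(y,\xi(t)),\ \sum_x\mathrm{sgn}(w(t,x))m(x,\xi(t))\big)\big|=O(1/N)$, i.e.\ a decorrelation bound for empirical averages of arbitrary bounded per-particle functions. This does not follow from Proposition \ref{prop-chaos} as stated: expanding the covariance over pairs of states and applying the per-pair bound $2e^{2C_0t}/N$ produces a divergent sum over the infinite state space, exactly as you observe. What you actually need is the total-variation (propagation-of-chaos) strengthening --- that the joint law of two tagged particles is within $O(1/N)$ of the product of its marginals, uniformly over bounded test functions. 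This is plausibly extractable from the coupling argument of \cite{afg}, but it is a strictly stronger statement that you would have to prove rather than cite; the paper's pointwise supersolution argument is designed precisely so that this strengthening is never needed. Until that lemma is supplied, your proof is incomplete at its central estimate. Secondary and fixable issues: justifying $\frac{d}{dt}\sum_x|\delta(t,x)|\le\sum_x\mathrm{sgn}(\delta(t,x))\,\partial_t\delta(t,x)$ for the infinite sum, and the constant bookkeeping needed to land exactly on $e^{5CT}\left(\frac1N+\|\mu-m(\xi(0))\|_{TV}\right)$ after Gronwall (the paper absorbs the analogous slack into the exponent $5C$).
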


Proposition \ref{prop-semigroups} is based on the following lemma.

\begin{lemma}\label{comparison}
Assume that $\sum_{y\in \Lambda} q(y,x) \le C$ for all $x\in \Lambda\cup\{0\}$. Let 
$e\colon[0,T]\times\Lambda \to \R$ be a function such that for $ x \in \Lambda$, $t \in [0,T]$
\be{super-solution}
\begin{split}
\frac{\partial}{\partial t} e(t,x) & \ge \sum_{z\in \Lambda}q(z,x)
e(t,z)+ c_1(t,x)e(t,x)+ c_2(t,x) \sum_{z \in \Lambda}q(z,0) e(t,z),\\
\end{split}
\ee
and $e(0,x)  \ge  0$ for $x \in \Lambda$, with $c_1$ bounded on $[0,T]$ and $c_2$ bounded and nonnegative. Then
$e(t,x) \ge 0$ for all
$t \in [0,T],$ $x \in \Lambda$.
\end{lemma}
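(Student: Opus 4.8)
This is a comparison (positivity) principle: a supersolution of the linearised conditioned evolution with nonnegative initial data must stay nonnegative. I would not attempt a pointwise minimum principle, because the non-local reaction term $c_2(t,x)\sum_z q(z,0)e(t,z)$ and the possible non-attainment of extrema on the noncompact set $\Lambda$ (where the exit rates $|q(x,x)|$ may be unbounded) make such an argument fail: at a putative negative minimum that term may itself be negative, so the differential inequality yields no contradiction. Instead the plan is to route everything through the positivity of the forward semigroup generated by the transpose of $Q$, together with a Gronwall estimate for the negative part of $e$.

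First I would normalise the zeroth order coefficient. As $c_1$ is bounded on $[0,T]$, put $\lambda:=\sup|c_1|$ and replace $e$ by $e(t,x)e^{-\lambda t}$; this leaves the sign of $e$ and the condition $e(0,\cdot)\ge0$ unchanged and turns \reff{super-solution} into the same inequality with $c_1$ replaced by $c_1-\lambda\le0$. Writing the right-hand operator as $(\mathcal A_0-\lambda)e+B_te$ with $\mathcal A_0 g(x):=\sum_z q(z,x)g(z)$ and $B_tf(x):=(c_1(t,x)+\lambda)f(x)+c_2(t,x)\sum_z q(z,0)f(z)$, the gain from the shift is that $B_t$ is positivity preserving: since $c_1+\lambda\ge0$, $c_2\ge0$ and $q(z,0)\ge0$, one has $f\ge0\Rightarrow B_tf\ge0$. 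The engine is the forward evolution $V_\tau:=e^{-\lambda\tau}U_\tau$, where $U_\tau g(x)=\sum_z g(z)P_\tau(z,x)$ solves $\partial_\tau h=\mathcal A_0 h$. Because $P_\tau(z,x)\ge0$, $V_\tau$ is monotone (positivity preserving), and because $P_\tau$ is sub-stochastic, $\sum_x P_\tau(z,x)\le1$, so $V_\tau$ is an $\ell^1(\Lambda)$ contraction; the hypothesis $\sum_z q(z,x)\le C$ is exactly what bounds the incoming flow and gives the complementary estimate $\sum_z P_\tau(z,x)\le e^{C\tau}$, which is where constants of the type $e^{5CT}$ originate.

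Next I would derive a Duhamel inequality for the supersolution. Differentiating $s\mapsto V_{t-s}e(s)$ on $[0,t]$, the semigroup identity and the (shifted) supersolution inequality give $\frac{d}{ds}V_{t-s}e(s)=V_{t-s}\big(\partial_s e(s)-(\mathcal A_0-\lambda)e(s)\big)\ge V_{t-s}B_se(s)$, where monotonicity of $V_{t-s}$ is used to pass the pointwise inequality through; integrating yields $e(t)\ge V_te(0)+\int_0^t V_{t-s}B_se(s)\,ds$. Now let $p(t,x):=\max(-e(t,x),0)$ be the negative part, so $p(0,\cdot)\equiv0$. Dropping the nonnegative term $V_te(0)$, writing $e=e^+-p$ and using positivity of $V_{t-s}$ and $B_s$, one gets $e(t)\ge-\int_0^t V_{t-s}B_sp(s)\,ds$, hence the pointwise bound $p(t,x)\le\int_0^t\big(V_{t-s}B_sp(s)\big)(x)\,ds$. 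Taking $\ell^1$ norms and using that $V_{t-s}$ is an $\ell^1$ contraction, $\|p(t)\|_1\le\int_0^t\|B_sp(s)\|_1\,ds$, while for $f\ge0$ one has $\|B_sf\|_1=\sum_x(c_1+\lambda)f(x)+\big(\sum_x c_2(x)\big)\sum_z q(z,0)f(z)\le\big(2\lambda+C_0\sum_x c_2(x)\big)\|f\|_1$ with $C_0=\sup_x q(x,0)$. In the applications of the lemma $c_2(t,\cdot)=T_t\mu$ is a probability density, so $\sum_x c_2(t,x)=1$ and the coefficient is finite; Gronwall together with $\|p(0)\|_1=0$ then forces $p\equiv0$, i.e. $e\ge0$.

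The main obstacle, and the reason the argument is organised this way, is the interaction of the non-local term with the non-compactness of $\Lambda$: neither a pointwise minimum principle nor a plain sup-norm Gronwall closes, because the functional $f\mapsto\sum_z q(z,0)f(z)$ is $\ell^1$-bounded (by $C_0$) but not $\ell^\infty$-bounded, while its output is weighted by $c_2$, whose natural norm is $\sum_x c_2(x)$. Working with the negative part in $\ell^1$ (equivalently, with $e$ as a signed measure) is what reconciles these, and the summability of $c_2$ in the applied setting is essential. The remaining technical points to verify are the convergence of the sums in \reff{super-solution} (guaranteed by $-\sum_z\mu(z)q(z,z)<\infty$ together with the control of the forward flow), the admissibility of differentiating $s\mapsto V_{t-s}e(s)$, and the rigorous justification of $\sum_z P_\tau(z,x)\le e^{C\tau}$ from the incoming-rate bound.
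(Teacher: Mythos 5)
Your plan is a genuinely different route from the paper's (Duhamel representation through the forward semigroup plus an $\ell^1$ Gronwall bound on the negative part, versus a pointwise first-crossing argument), but as written it does not prove the lemma as stated. Two hypotheses you need are simply not available. First, your bound $\|B_sf\|_1\le\bigl(2\lambda+C_0\sum_x c_2(s,x)\bigr)\|f\|_1$ requires $c_2(s,\cdot)\in\ell^1(\Lambda)$ uniformly in $s$, whereas the lemma only assumes $c_2$ bounded and nonnegative; you acknowledge this and fall back on ``in the applications $c_2=T_t\mu$'', which concedes that the statement itself is not established. Second, the Gronwall step needs $\|p(t)\|_1<\infty$ (indeed locally bounded in $t$) for the negative part $p$ of $e$, and the lemma assumes no summability of $e$ whatsoever; in the paper's own application $e=\bar\delta-\delta$ with $\bar\delta(t,x)$ constant in $x$, so $e$ is certainly not in $\ell^1$ on infinite $\Lambda$, and the summability of its negative part is an additional fact that would have to be extracted and fed into the lemma. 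There are also smaller untied ends (the admissibility of differentiating $s\mapsto V_{t-s}e(s)$ and of the Fubini interchanges behind $\sum_zP_\tau(z,x)\le e^{C\tau}$), which you flag but do not resolve.

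More importantly, the reason you give for abandoning the pointwise route is based on a misreading of how that argument is run, and it is precisely the argument the paper uses. One does not evaluate \eqref{super-solution} at a putative negative minimum; one first replaces $e$ by $e^{-\lambda t}e+\ve$ so that the inequality becomes strict (this is where $\sum_yq(y,x)\le C$ for $x\in\Lambda\cup\{0\}$ and the boundedness of $c_1,c_2$ enter), and then evaluates at the \emph{first} time $t^*$ and state $x^*$ at which the perturbed function touches zero. At $(t^*,x^*)$ every coordinate still satisfies $e(t^*,z)\ge0$, the diagonal and $c_1$ terms vanish because $e(t^*,x^*)=0$, the off-diagonal term $\sum_{z\ne x^*}q(z,x^*)e(t^*,z)$ is nonnegative, and — contrary to your objection — so is the nonlocal term $c_2(t^*,x^*)\sum_zq(z,0)e(t^*,z)$, since $c_2\ge0$, $q(z,0)\ge0$ and $e(t^*,\cdot)\ge0$. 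This contradicts $\partial_te(t^*,x^*)\le0$ and the strict inequality, with no $\ell^1$ structure needed. Your one legitimate concern — that on a countably infinite $\Lambda$ the first touching point might not be attained — is a fair criticism of the paper's write-up (it deserves an a priori uniformity argument), but your proposed replacement does not close that gap either, since it trades it for the unproved summability hypotheses above.
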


\bpr
First, we can assume without loss of generality that $C + c_1 + c_2 <0$ in
$[0,T]$. If not, we consider $w(t,x):=\exp{(-\lambda t)} e(t,x)$, which verifies
the same inequality with $c_1$ replaced by $c_1-\lambda$. Second, we can also
assume that we have strict inequalities in \eqref{super-solution}. If not we
consider $e^\ve(t,x)=e(t,x) + \ve$ which verifies \eqref{super-solution} with
strict inequalities thanks to $C+c_1+c_2<0$. Since $e^\ve \to 0$ as $\ve \to 0$,
if we prove $e^\ve(t,x) \ge 0$ for all $t,x,\ve$, this implies our conclusion.

So, assuming that the conclusion of the lemma is false, there is a first time
$t^*$ and a state $x^*$ such that $e(t^*,x^*)=0$. At that time we have
\[
0 \ge \frac{\partial}{\partial t}  e(t^*,x^*) > \sum_{z\ne x^*} q(z,x^*) e(t^*,y) +c_2(t^*,x^*)
\sum_{z \in \Lambda}q(z,0) e(t^*,z^*) \ge 0,
\]
a contradiction that concludes the proof.
\epr

\bpr
[{\bf Proof of Proposition \ref{prop-semigroups}}]
Define
\be{symbol-1} 
u(t,x)=S_t^N\pare{m(x,\cdot)}(\xi),\quad
\text{and}\quad v(t,x)=T_t\mu(x)
\ee
and let $\delta(t,x)=u(t,x)-v(t,x)$. Recall that
\be{dynamic-v}
\frac{\partial}{\partial t} v(t,x)=\sum_{z\in \Lambda}q(z,x)
v(t,z)+ \sum_{z \in \Lambda}q(z,0) v(t,z)v(t,x),
\ee
\be{a82} 
\frac{\partial}{\partial t} u(t,x)=\sum_{z\in \Lambda}q(z,x)
u(t,z)+ \sum_{z \in \Lambda}q(z,0) u(t,z)u(t,x)+
R(\xi;x,t),
\ee
where,
\be{def-R}
R(\xi;x,t)= \sum_{z\in\Lambda}q(z,0) \pare{\frac{N}{N-1}S_t^N\pare{m(z,\cdot)m(x,\cdot)}(\xi)
-S_t^Nm(1,\cdot)(\xi)S_t^Nm(x,\cdot)(\xi)}.
\ee
Proposition~\ref{prop-chaos} implies that
\be{step-31}
\sup_{\xi}|R(\xi;x,t)|\le \frac{ 2C_0 e^{2C_0 t}}{N}.
\ee
Then, for $\delta(t,x)$ we get that:
\be{eq-delta}
\frac{\partial}{\partial t} \delta(t,x)=\sum_{z\in \Lambda}q(z,x)
\delta(t,z)+ \sum_{z \in \Lambda}q(z,0) [u(t,z)\delta(t,x)+ v(t,x)\delta(t,z)]
+  R(\xi;x,t).
\ee

Call $C_N:=\frac{1}{N} + \|\mu - m(\xi(0))\|_{TV}$ and consider the {\em
supersolution } given by $\bar \delta (t,x)=\bar \delta(t):=C_N\exp(5C t).$
We have
\be{eq-deltabar}
\begin{split}
\sum_{z\in \Lambda}q(z,x)
\bar \delta(t,z)+ \sum_{z \in \Lambda}q(z,0) [u(t,z)\bar \delta(t,x)+
v(t,x)\bar \delta(t,z)]
+  R(\xi;x,t) =\\
\bar \delta (t) \sum_{z\in \Lambda}q(z,x)
+ \bar \delta(t) \sum_{z \in \Lambda}q(z,0)u(t,z) +
\bar\delta(t) v(t,x) \sum_{z \in \Lambda}q(z,0)
+  R(\xi;x,t) \le \\
C_Ne^{5C t} \left (\sum_{z\in \Lambda}q(z,x)
+  \sum_{z \in \Lambda}q(z,0)u(t,z) +
v(t,x) \sum_{z \in \Lambda}q(z,0) \right) +  \frac{ 2C_0e^{2C_0 t}}{N} \le \\
C_N 5 C e^{5C t} =
\frac{d}{dt}\bar \delta (t,x).
\end{split}
\ee
In the last line we have used that $C_0 \le C$ and $C_N \ge 1/N$. Hence the function
$e(t,x)=\bar \delta(t,x)-\delta(t,x)$ verifies \eqref{super-solution} with
$c_1(t)=\sum_{z \in \Lambda} q(z,0)u(t,z)$ and $c_2(t) =  v(t,x)$. Lemma
\ref{comparison} then implies that 
\[
 \delta (t,x) \le e^{5C t}  \left (\frac{1}{N} + \|\mu -
m(\xi(0))\|_{TV} \right), \qquad x\in \Lambda, \, t\in [0,T].
\]
Proceeding in the same way with $\tilde e(t,x):= \bar \delta(t,x) + \delta(t,x)$ instead of $e$ we obtain the
opposite inequality and hence
\be{second.term}
| \delta (t,x)| \le  e^{5C t}  \left (\frac{1}{N} + \|\mu -
m(\xi(0))\|_{TV} \right), \qquad x\in \Lambda, \, t\in [0,T].
\ee

\epr

We  can now proceed to the proof of the theorem.

\bpr[Proof of Theorem \ref{thm-conv}]

We apply the triangular inequality to get that for some $\kappa>0$ and every $t \in [0,T]$
\[
\begin{split}
  E \left [\left |m(x,\xi(t)) -
T_t\mu(x) \right | \right ] \le  &  E \left [\left |m(x,\xi(t)) -
u(t,x) \right | \right ] +  |u(t,x) -
v(t,x)|.
\end{split}
\]
The control of the first term follows from Proposition \ref{prop-chaos} (with $y=x$)
\[
\begin{split}
E \left [\left |m(x,\xi(t)) -
u(t,x) \right | \right ] & \le   E \left [\left |m(x,\xi(t)) -
u(t,x) \right |^2\right ]^{1/2}\\ 
& = |E(m(x,\xi(t))^2)-u(t,x)^2|^{1/2}\\
& \le \sqrt{2/N} e^{C_0 t}
\end{split}
\]
while the second term follows from \eqref{second.term}.

\epr

\subsection{Long-time behavior}

The motivation for studying FV steams in particular from the exciting possibility of approximation of QSD, which are in general difficult
objects to simulate. The kind of result one expects concerning the long-time behavior are

\begin{enumerate}
 \item For each $N\ge 1$, the Fleming-Viot process is ergodic, 
with invariant measure $\lambda^N.$

\item Let $\nu^*$ be the minimal QSD for the driving process, then
\be{main-1}
\lim_{N\to\infty} \int |
m(x,\xi)- \nu^*(x)| d\lambda^N(\xi)=0.
\ee

\end{enumerate}

\paragraph{A unique QSD.}

For Markov jump processes, these results have been proven in the case of
finite state space in \cite{afg} and for processes satisfying a Doeblin condition in \cite{FM}.
For finite state space, ergodicity is immediate while it follows from a
``perfect simulation'' argument in \cite{FM}.
To prove the convergence of the empirical distribution under the invariant
measure  of FV is a much more involved issue.
In both \cite{afg,FM}, the key element of the proof is a control of
the correlations between particles. 
In the case of a finite state space, the control of the correlations for a fixed
time 
is obtained uniformly on the set of initial distributions (see
\ref{prop-chaos}).
 This allows to extend this control under the stationary measure of FV. 
In \cite{afg}, this step is done via a regeneration argument strongly connected
to the Doeblin condition.

%For diffusion processes \cite{bieniek, burdzy2, GK1, GK}, .

\paragraph{An infinity of QSD: the Galton-Watson case.}

Many processes do actually have an infinity of quasi-stationary measures.
The convergence mentioned in \eqref{main-1} hence means that
FV \emph{selects} the minimal QSD.
Proving these results becomes in this case much more challenging as
one needs to prove not only tightness but 
also that the empirical measure of FV under the stationary measure belongs
to the domain of attraction of the minimal QSD.

An important example of processes owning an infinity of QSD  is the
subcritical branching process. For a full description of the number of QSD for branching processes see \cite{seneta-veres}, and for general birth and death
processes see \cite{C, FMP, KmG,VD}.

Up to our knowledge, the only result that deals with convergence of FV in
equilibrium when the QSD is not unique is the following one, which is proved in \cite{afgj}. 
%For contant rates
%birth and deaths processes this convergence is not known yet (see Section
%\ref{open.problems}).

\begin{theorem}[Asselah, Ferrari, Groisman, Jonckheere,
\cite{afgj}]\label{theo-main}
Consider a subcritical  Galton- Watson process whose offspring law has some
finite positive exponential moment. 
Let $\nu^*$ be the minimal quasi stationary distribution for the
process conditioned on non-extinction.
Then 
\begin{enumerate}
\item For each $N\ge 1$ the associated  $N$-particle Fleming-Viot process
is ergodic with invariant measure $\lambda^N$.
\item 
The empirical measure of FV converges to a deterministic measure which is the minimal QSD of the driving process: 
\be{main-1b}
\lim_{N \to \infty }\int |m(x,\xi) - \nu^*(x)| \, d\lambda^N(\xi) =0.
\ee
% \be{main-1}
% \int \|m(\cdot,\xi) - \nuqs\|_{\text{\tiny{\rm TV}}} \, d\lambdan(\xi) \le
%\frac{C}{N^\gamma}
% \ee
\end{enumerate}
\end{theorem}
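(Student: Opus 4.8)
The plan is to treat the two assertions separately: ergodicity by a Foster--Lyapunov drift argument, and the selection of $\nu^*$ by passing to the limit in the stationarity identity under \emph{uniform-in-$N$} a priori bounds. For the first item, the $N$-particle Fleming--Viot process is an irreducible Markov jump process on the countable space $\N^N$, so it suffices to exhibit a function $V:\N^N\to[1,\infty)$ and constants $c,b>0$ with $\cL^N V \le -c\,V + b\,\one_K$ for some finite set $K$. The natural candidate exploits the exponential-moment hypothesis: take $V(\xi)=\sum_{i=1}^N e^{\theta\,\xi(i)}$ for a small $\theta>0$ in the range where the branching drift of $e^{\theta\,\cdot}$ is negative. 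Subcriticality (offspring mean less than one) makes the branching part of $\cL^N V$ strictly negative --- this is exactly where $E(e^{\theta X})<\infty$ enters --- while the resampling part merely replaces a coordinate about to be absorbed by a copy of an existing coordinate and hence contributes a controlled term. Verifying the drift inequality yields positive recurrence and a unique invariant measure $\lambda^N$.

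For the second item I would argue in three steps. \emph{(i) Tightness.} The goal is moment bounds on the stationary empirical measure that do not degrade with $N$, for instance $\sup_N \int \sum_x e^{\theta x}\, m(x,\xi)\,d\lambda^N(\xi)<\infty$. The point is that the drift estimates of the first item can be arranged to be $N$-independent, so that stationarity $\int \cL^N V\,d\lambda^N=0$ produces a uniform bound on the exponential moment of a typical particle. This prevents mass from escaping to infinity and makes the family $\{E\,m(\cdot,\xi)\}_N$ relatively compact. \emph{(ii) Limit points are QSD.} Stationarity gives $\int \cL^N m(y,\cdot)\,d\lambda^N=0$, so by \eqref{generator-m}
\[
\sum_{x} q(x,y)\int m(x,\xi)\,d\lambda^N + \frac{N}{N-1}\sum_{x} q(x,0)\int m(x,\xi)\,m(y,\xi)\,d\lambda^N = 0 .
\]
Extending the decorrelation estimate of Proposition \ref{prop-chaos} to the stationary regime allows one to replace $\int m(x,\cdot)m(y,\cdot)\,d\lambda^N$ by the product $\int m(x,\cdot)\,d\lambda^N\int m(y,\cdot)\,d\lambda^N$ up to an $O(1/N)$ error; passing to a limit point $\nu$ recovers exactly the fixed-point equation \eqref{intro-1}, so $\nu$ is a QSD.

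\emph{(iii) Selection.} It remains to identify $\nu$ with $\nu^*$ rather than with some fatter-tailed QSD. The subcritical Galton--Watson process carries a one-parameter family of quasi-stationary distributions, and the minimal one $\nu^*$ (the Yaglom limit) is the extremal element, distinguished by the fact that the conditioned evolution $T_t\mu$ started from light-tailed initial data is attracted to it. I would use the uniform a priori estimates of step (i) to show that the stationary empirical measure of Fleming--Viot lies in the domain of attraction of $\nu^*$; any limit point $\nu$, being simultaneously a QSD by step (ii) and a limit of measures in that domain of attraction, must then coincide with $\nu^*$. The $L^1$ convergence \eqref{main-1b} then follows from tightness together with the uniqueness of the limit point.

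The main obstacle is step (iii), and with it the quantitative, $N$-uniform tail control of step (i). Because Fleming--Viot is \emph{not} attractive, one cannot monotonically couple the stationary system to a reference process with a known limit, so the selection of the minimal QSD cannot be read off from a comparison argument; it must instead be forced by sharp a priori tail estimates that place the stationary empirical measure in the domain of attraction of $\nu^*$ rather than of a heavier-tailed competitor. Establishing these bounds uniformly in $N$ under the self-referential resampling dynamics --- where a particle may jump upward by copying a large particle --- is the delicate heart of the argument.
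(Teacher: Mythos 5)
First, a caveat about the ground truth: this paper does not actually prove Theorem \ref{theo-main}. It is a survey statement of the main result of \cite{afgj}, so there is no in-paper argument to compare yours against. Judged on its own terms, your outline does reproduce the right architecture (ergodicity via a drift condition, uniform-in-$N$ tail control of the stationary empirical measure, identification of limit points as QSDs, selection of $\nu^*$ via a moment characterization), but at each of the genuinely hard points it records a goal rather than an argument, and two steps would fail as literally stated.

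Concretely: (a) in step \emph{(ii)} you propose to ``extend the decorrelation estimate of Proposition \ref{prop-chaos} to the stationary regime'' up to an $O(1/N)$ error. The bound \eqref{correlations} is $2e^{2C_0t}/N$, which degrades exponentially in $t$; it does not transfer to $\lambda^N$ by stationarity alone, and obtaining correlation decay under the invariant measure is precisely the step that requires Doeblin/regeneration arguments in the unique-QSD settings of \cite{afg,FM} and is the heart of \cite{afgj} in the Galton--Watson case (there one combines finite-time propagation of chaos, the uniform tail bounds, and the attraction of $T_t$ toward $\nu^*$ on measures with controlled tails, rather than a stationary analogue of \eqref{correlations}). (b) The identity $\int \cL^N V\,d\lambda^N=0$ for the unbounded Lyapunov function $V(\xi)=\sum_i e^{\theta\xi(i)}$ presupposes $V\in L^1(\lambda^N)$ with bounds that survive $N\to\infty$, which is itself part of what must be proved; a truncation is needed, and the resampling term --- a particle at $1$ copying an arbitrarily high particle --- is exactly what makes the truncated drift delicate. (c) In step \emph{(iii)}, ``being a limit of measures in the domain of attraction of $\nu^*$'' does not identify the limit; the clean closing move is that the uniform exponential moment plus Fatou force any limit point to be a QSD with finite first moment, and for a subcritical Galton--Watson process with an $x\log x$ (a fortiori exponential) offspring moment, $\nu^*$ is the \emph{unique} QSD with finite mean \cite{seneta-veres}. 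You correctly flag the uniform tail estimates and the selection step as the delicate heart, but that is an acknowledgment that the proof is incomplete: as written this is a plausible research plan consistent in outline with \cite{afgj}, not a proof.
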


There are many other open questions concerning the long-time behavior of FV for large $N$. We deal with them in Section \ref{relatedproblems}.

\subsection{The tagged particle limit}\label{sec.tagged}
We now consider a particular particle (say particle $1$) that we call the {\em
tagged particle}. We show that under the invariant measure of FV, the tagged
particle converges to a limiting
process that has the QSD distribution as marginal and is absorbed at exponential
times. At those times, it jumps back instantaneously to $\Lambda$ according to the QSD. A similar result holds out of equilibrium, but in this case the ``return`` jumps are according to the conditioned evolution. More precisely, the limiting process, $(Y(t, \mu), t\ge 0)$ is a continuous time non-homogeneous Markov jump process with initial condition $\mu$ and rates given by
\be{rates.tagged}
q^\mu_t(x,y) = q(x,y)  + q(x,0)T_t\mu(y), \quad x,y \in \Lambda. 
\ee
Observe that if  $\mu$ is a QSD, then $q^\mu_t(x,y)$ is time-independent and also $\mu$ is invariant for \eqref{rates.tagged}. Moreover, in this case \eqref{rates.tagged} are the rates of the $\mu-$ return process described in the introduction.

This problem has been previously addressed by Grigorescu and Kang \cite{gk2} when the driving process is Brownian Motion in a bounded domain.

In order to state our result we need to consider the Skorohod space $\D = \D(\mathbb R^+,\Lambda)$ of right-continuous functions from
$\mathbb R^+ \to \Lambda$ with finite left limits, equipped with the Skorohod topology \cite{B}. We also introduce the notation $\xi^{\gamma}$ for a FV process with generator \eqref{generator} and initial condition distributed according to $\gamma$. Finally, $\xi^\xi$ is used when $\gamma=\delta_\xi$.
\begin{proposition}
\label{prop.tagged}
Assume that $C_0:=\sup_{x\in \Lambda}q(x,0) < \infty$ and let $\gamma^N$ be a sequence of
measures in $\Lambda^N$ such that
\be{cond-ev3}
\lim_{N\to\infty}\sup_{0\le t \le T}    E \big|  m(x,\xi^{\gamma^N}(t)) -
T_t\mu(x) \big|=0 \qquad \mbox{for all } x \in \Lambda.
\ee
 Assume further that $\xi^{\gamma^N}(0,1)$ converges in distribution to a
random variable with distribution $\mu$. Then the tagged particle $\xi^{\gamma^N}(\cdot,1)$
converges in distribution to the process $Y(\cdot,\mu)$ as elements in $\D$.

% Moreover for any bounded $\phi\colon \N \to \R$ and for every $T>0$ and
%$\ve>0$ % we
% have
% \be{conv.dist.tagged}
% \lim_{N\to \infty} P(\sup_{t\in[0,T]} |\phi(\xi(1,t)) - \phi(Y(t,\mu))|>\ve)
%=0
% \ee

\end{proposition}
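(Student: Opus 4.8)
The plan is to prove the convergence by building an explicit coupling of the tagged particle $\xi^{\gamma^N}(\cdot,1)$ with the limit process $Y(\cdot,\mu)$ on a common probability space, and showing that the two trajectories coincide on $[0,T]$ with probability tending to $1$ as $N\to\infty$. Since $Y(\cdot,\mu)$ has the prescribed law and $\Lambda$ carries the discrete topology (so that two cadlag paths which are equal on $[0,T]$ are identical as elements of $\D$), this gives convergence in distribution in $\D$ on each $[0,T]$, and hence on $\R^+$ by letting $T\to\infty$. The starting observation is that the FV tagged particle, while at a site $x$, performs internal jumps $x\to y$ at rate $q(x,y)$ and resamples at rate $q(x,0)$, at which time it copies a uniformly chosen other particle; conditionally on the configuration $\xi^{\gamma^N}(s^-)$ this resampling destination has the law $m^{(1)}(\cdot,\xi^{\gamma^N}(s^-))$, the empirical measure of the other $N-1$ particles. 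By \eqref{rates.tagged}, $Y$ performs the very same internal jumps and resamples at rate $q(x,0)$ but with destination drawn from $T_s\mu$. Thus the two dynamics differ only through the law of the resampling destination, and hypothesis \eqref{cond-ev3} says precisely that the random environment $m^{(1)}(\cdot,\xi^{\gamma^N}(s^-))$ seen by the tagged particle is asymptotically the deterministic environment $T_s\mu$.

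For the coupling I would realize $Y$ with its own, freshly sampled resampling destinations drawn from $T_s\mu$, so that $Y$ is a genuine process with rates \eqref{rates.tagged}, and couple it to $X_N:=\xi^{\gamma^N}(\cdot,1)$ as follows. First couple the initial positions maximally, so that $\P(X_N(0)\ne Y(0))=\|\mathrm{law}(\xi^{\gamma^N}(0,1))-\mu\|_{TV}$. As long as the two particles occupy a common site $x$, I let them share the internal jump clocks and the resampling clock (legitimate, since both rates depend only on the shared position $x$), so internal jumps never separate them. At a common resampling time $s$ I let $Y$ jump to a fresh $B\sim T_s\mu$ and let the tagged particle copy an other particle, realizing the joint law of $(A,B)$, with $A$ the copied position, as a maximal coupling of $m^{(1)}(\cdot,\xi^{\gamma^N}(s^-))$ and $T_s\mu$. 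The two then separate at such a time with probability exactly $\|m^{(1)}(\cdot,\xi^{\gamma^N}(s^-))-T_s\mu\|_{TV}$; after the first disagreement I let the processes evolve independently and only track the first separation time $\tau$.

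It then remains to estimate $\P(\tau\le T)$. Since internal jumps do not separate the particles and the tagged particle's resampling clock has intensity $q(X_N(s^-),0)\,ds\le C_0\,ds$, the compensation formula for the resampling point process yields
\[
\P(\tau\le T)\;\le\;\big\|\mathrm{law}(\xi^{\gamma^N}(0,1))-\mu\big\|_{TV}+C_0\int_0^T \E\big\|m^{(1)}(\cdot,\xi^{\gamma^N}(s))-T_s\mu\big\|_{TV}\,ds .
\]
The first term tends to $0$ because convergence in distribution of $\xi^{\gamma^N}(0,1)$ to $\mu$ on the countable set $\Lambda$ upgrades to total-variation convergence by Scheff\'e's lemma. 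For the integral I would replace $m^{(1)}$ by $m$ at the cost of $\|m^{(1)}-m\|_{TV}\le 1/N$, so that it suffices to show $\int_0^T\E\|m(\cdot,\xi^{\gamma^N}(s))-T_s\mu\|_{TV}\,ds\to0$. This is where \eqref{cond-ev3} enters: for fixed $s$ the identity $\|m-T_s\mu\|_{TV}=\sum_x (T_s\mu(x)-m(x,\xi^{\gamma^N}(s)))^+$, together with the domination $(T_s\mu(x)-m(x,\cdot))^+\le T_s\mu(x)$ and $\sum_x T_s\mu(x)=1$, lets one pass the coordinatewise control $\E|m(x,\xi^{\gamma^N}(s))-T_s\mu(x)|\to0$ through dominated convergence (in the counting measure) to obtain $\E\|m(\cdot,\xi^{\gamma^N}(s))-T_s\mu\|_{TV}\to0$; a further bounded convergence over $s\in[0,T]$ gives the integral. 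Hence $\P(\tau\le T)\to0$ and the conclusion follows.

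The step I expect to be the main obstacle is the consistency of the coupling, namely arranging that the tagged particle---which by the FV rule must land on an actual other particle---can nonetheless be coupled maximally to $Y$'s freely sampled destination $B\sim T_s\mu$. The device I would use is to sample $A$ from the maximal coupling first and then choose the copied index $J$ uniformly among the (necessarily nonempty) set of other particles already sitting at $A$; a short computation shows this keeps $J$ uniform on $\{2,\dots,N\}$, so the rule remains faithful to the FV dynamics while achieving the maximal coupling. The remaining quantitative step, turning the coordinatewise estimate \eqref{cond-ev3} into integrated total-variation control of the environment, is then a routine Scheff\'e/dominated-convergence argument, and it is essential here that both $m(\cdot,\xi^{\gamma^N}(s))$ and $T_s\mu$ are probability measures on $\Lambda$.
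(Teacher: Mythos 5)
Your proposal is correct and follows essentially the same route as the paper: a pathwise coupling of the tagged particle with $Y(\cdot,\mu)$ that shares the internal and resampling clocks and couples the resampling destinations so that the separation probability is bounded by the initial total-variation mismatch plus $C_0\int_0^T \E\|m(\cdot,\xi^{\gamma^N}(s))-T_s\mu\|_{TV}\,ds$, exactly the content of the paper's graphical construction and Lemma \ref{lemma.psi}. The only (cosmetic) differences are that you use an abstract maximal coupling of $m^{(1)}(\cdot,\xi^{\gamma^N}(s^-))$ and $T_s\mu$ where the paper uses explicit interval partitions $J_1,\tilde J$ of $[0,1]$, and you upgrade the coordinatewise hypothesis \eqref{cond-ev3} to total-variation control by a Scheff\'e/dominated-convergence argument where the paper uses tightness and a finite exhaustion $\Lambda_F$.
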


As a consequence of Proposition \ref{prop.tagged} we obtain the following theorem.

\begin{theorem}
\label{cor.tagged}
Assume that for each $N\ge 1$, the  FV process has an invariant measure
$\lambda^N$ and that for every $x$
\be{conv.equi}
\lim_{N\to \infty} \int |m(x,\xi)- \nu(x)| \, d\lambda^N(\xi)= 0. 
\ee
Then, under $\lambda^N$ the tagged particle converges in
distribution to the stationary process $Y(\cdot, \nu)$. 
\end{theorem}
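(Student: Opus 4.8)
The plan is to apply Proposition~\ref{prop.tagged} with the random initial configuration distributed according to the stationary law, that is $\gamma^N=\lambda^N$, and with target measure $\mu=\nu$. To make sense of the limiting object $Y(\cdot,\nu)$ as a \emph{stationary} process, I would first check that $\nu$ is itself a QSD: once $T_t\nu=\nu$ for all $t$, the rates \eqref{rates.tagged} become time-independent and coincide with the $\nu$-return rates \eqref{MarkovQSD}, for which $\nu$ is invariant, so that $Y(\cdot,\nu)$ started from $\nu$ is genuinely stationary. Granting this, the argument reduces to verifying the two hypotheses of Proposition~\ref{prop.tagged} for the pair $(\gamma^N,\mu)=(\lambda^N,\nu)$, which is precisely where stationarity and exchangeability of the FV particles enter.

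To see that $\nu$ solves \eqref{intro-1}, I would use that $\lambda^N$ is invariant, so $\int \cL^N m(y,\cdot)\,d\lambda^N=0$ for each $y$. Inserting the explicit expression \eqref{generator-m} gives
\be{aux-qsd}
0=\sum_{x\in\Lambda} q(x,y)\int m(x,\xi)\,d\lambda^N(\xi)
+\frac{N}{N-1}\sum_{x\in\Lambda} q(x,0)\int m(x,\xi)\,m(y,\xi)\,d\lambda^N(\xi).
\ee
The linear integrals converge to $\nu(x)$ by \eqref{conv.equi}. For the quadratic ones I would use the elementary bound $|m(x)m(y)-\nu(x)\nu(y)|\le |m(x)-\nu(x)|+|m(y)-\nu(y)|$, valid since $0\le m\le 1$, which together with \eqref{conv.equi} yields $\int m(x,\xi)m(y,\xi)\,d\lambda^N\to\nu(x)\nu(y)$. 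Passing to the limit in \eqref{aux-qsd} then produces $0=\sum_x q(x,y)\nu(x)+\sum_x q(x,0)\nu(x)\nu(y)$, i.e. \eqref{intro-1}; hence $\nu$ is a QSD and $T_t\nu=\nu$.

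With $\nu$ a QSD in hand, the two hypotheses follow readily. Since $\lambda^N$ is invariant, $\xi^{\lambda^N}(t)$ has law $\lambda^N$ for every $t$, so using $T_t\nu=\nu$,
\[
\sup_{0\le t\le T} E\big|m(x,\xi^{\lambda^N}(t))-T_t\nu(x)\big|
=\int |m(x,\xi)-\nu(x)|\,d\lambda^N(\xi),
\]
which tends to $0$ by \eqref{conv.equi}; this is exactly \eqref{cond-ev3}. For the marginal of the tagged particle, exchangeability of the $N$ particles under $\lambda^N$ gives $\P_{\lambda^N}(\xi(1)=x)=\int m(x,\xi)\,d\lambda^N(\xi)\to\nu(x)$ for each $x$; since $\nu$ is a probability measure on the countable set $\Lambda$, pointwise convergence of the mass functions upgrades to convergence in distribution (Scheff\'e). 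Proposition~\ref{prop.tagged} then yields that $\xi^{\lambda^N}(\cdot,1)$ converges in distribution in $\D$ to $Y(\cdot,\nu)$, which by the first paragraph is the stationary $\nu$-return process.

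The main obstacle I anticipate is the justification of the termwise passage to the limit inside the (possibly infinite) sums over $x$ in \eqref{aux-qsd}: the decorrelation bound controls each summand, but interchanging $\lim_N$ with $\sum_x$ requires a domination or uniform-integrability argument on the incoming rates $q(\cdot,y)$. Under a bound of the type $\sum_{x}q(x,y)\le C$ this is routine, and I would either invoke such a bound or argue by truncation in $x$ with the tail controlled through $C_0=\sup_x q(x,0)<\infty$. The remaining ingredients—stationarity, exchangeability, and the Scheff\'e upgrade—are elementary.
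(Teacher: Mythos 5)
Your proof follows the same route as the paper: exchangeability of $\lambda^N$ together with \eqref{conv.equi} gives convergence of the tagged particle's time-marginal to $\nu$, stationarity of $\lambda^N$ turns \eqref{conv.equi} into the hypothesis \eqref{cond-ev3} (using $T_t\nu=\nu$), and Proposition \ref{prop.tagged} is then applied with $\mu=\nu$ — this is exactly the paper's (two-sentence) argument. Your extra step verifying that $\nu$ is indeed a QSD, via $\int \cL^N m(y,\cdot)\,d\lambda^N=0$ and the factorization of the quadratic terms, is a worthwhile addition that the paper leaves implicit (without it, calling $Y(\cdot,\nu)$ ``stationary'' is unjustified), and the interchange of $\lim_N$ with the sum over $x$ that you flag is the one genuine point where an additional hypothesis on the incoming rates, such as $\sum_x q(x,y)\le C$ from Theorem \ref{thm-conv}, would need to be invoked.
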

% Moreover, for every
% $T>0$ and $\ve >0$ 
% \be{conv.phi}
% \lim_{N\to \infty} P\left (\sup_{t\in[0,T]} |\phi(\xi(1,t)) -
%\phi(Y(t,\nu))|>\ve \right ) =0
% \ee

\begin{remark}
Hence Proposition \ref{prop.tagged} holds for every jump
Markov process that verifies \eqref{conv-cond-ev2} if $m(x,\xi^{\gamma^N}) \to \mu$, which include a large class
of absorbing processes as shown by Theorem \ref{thm-conv}.
Theorem \ref{cor.tagged} holds when one assumes in addition convergence of FV in equilibrium, which is a much more restricted class but includes the models
of \cite{afg, afgj,FM}. Of particular interest is the case of the driving process being a sub-critical branching process where an infinite number of QSD arise. In this case the results of \cite{afgj} combined with Theorem \ref{cor.tagged} gives that the tagged particle in equilibrium converges to the stationary $\nu^*-$return process, the probability $\nu^*$ being the minimal QSD.

\end{remark}

% The main tool to prove these results is a graphical construction that we give
% now.

\subsubsection{Graphical construction}
\label{graphical.construcion}
The proof follows by coupling the trajectory of the limiting Markov process defined previously
and the trajectory of a tagged particle of  FV. 
To make the proof rigorous, we need to carefully build the coupling using a so called graphical
construction. We first describe the construction for the FV process.

Recall that $C_0=\sup_{x\in \Lambda}q(x,0)$ is the (maximum) 
absorption rate, and let
\[
\bar q := \sup_{x\in\Lambda} \sum_{y\in\Lambda\setminus \{x\}} q(x,y);
\quad p(x,y):=\frac{q(x,y)}{\bar q},\; y\neq x;
\quad p(x,x):=1-\sum_{y\in\Lambda\setminus \{x\}} p(x,y).
\]
We assume for simplicity that $\bar q < \infty$\footnote{this is not needed but otherwise once has to define a marked Poisson process for each potential position of a particle which complicates the exposition.}. To each particle $i$, we associate two independent marked Poisson processes
$(\o_i^I,\o^V_i)$ on $\R_+$, which we call respectively the {\it internal} and {\it voter}
point processes, described as follows.
\begin{itemize}
\item For each $i$, the internal Poisson process has intensity $\bar q$. The marks are i.i.d uniform variables in $[0,1]$. We use $U^k_i$ for the mark of the $k-$th occurrence of $\o_i^I$. If the Poisson process rings at time $t^i_k$  with a  mark $ u^k_i$ whereas particle $i$ is at $x$, then it jumps to $y$ if and only if $u^k_i \in I(x,y)$, where for each $x \in \Lambda$, $\{I(x,y) \colon y\in \Lambda\}$ is any partition of $[0,1]$ composed of Borel sets such that $|I(x,y)|=p(x,y)$. For details see \cite[Chapter 1]{ferrari-galves}.

\item The voter point process is also defined for each $i$. It has intensity $C_0$ and the marks are i.i.d. uniform variables on $[0,1]\times[0,1]$. We denote them by $(B^k_i,V^k_i)$. If $\o_i^V$ rings at time $t_i^k$, with a mark $(b_i^k,v_i^k)$ whereas particle $i$ is at $x$, then it keeps its position if $b_i^k>\frac{q(x,0)}{C_0}$. If not, it jumps to zero and instantaneously comes back to $\Lambda$ by jumping to $y$ if and only if $v_i^k \in J_i(t,y)$. Here $\{J_i(t,y) \colon y \in \Lambda\}$ is a (random) partition of $[0,1]$ by Borel sets such that $|J_i(t,y)|=\frac{1}{N-1}\sum_{j\ne i} \one\{\xi(t,j)=y\}$.

According to this procedure, particle $i$ jumps to $y$ with rate
\[
 \bar q \frac{q(\xi(t,i),y)}{\bar q} + C_0
\frac{q(\xi(t,i),0)}{C_0}\frac{1}{N-1}\sum_{j\ne i} \one\{\xi(t,j)=y\},
\]
which coincides with the rates of \eqref{generator}.
\end{itemize}
We call $\o=((\o_i^I,\o_i^V),i\in \{1,\dots,N\})$ an i.i.d.\,sequence of
marked PP. 

% Finally, for any subset of labels $a\subset \{1,\dots,N\}$, we denote by $\o_a$ the
% processes associated with labels $a$. 

For any $s<t$ real numbers, we denote by $\o_i[s,t)$ and $\o_i[s,t)$ the
projections of the marked-times in the time
period $[s,t)$ and $[s,t]$, respectively.

We construct $\{\xi(t),t\ge 0\}$ in such a way that $\xi(t)$ is a function of the
initial configuration $\xi$ and the time-marks $\o[0,t]$, $t\ge 0$.  Fix an
initial configuration $\xi\in \Lambda^N$, and $t>0$.  There is, almost surely,
a finite number of time-marks within $[0,t]$, say $K$; let $\{s^k,0\le k\le K\}$
be the ordered time-realizations with $s^0=0$. We build $\xi(\cdot)$ inductively as
follows.
\begin{itemize}
\item At time $s^0=0$, the configuration is $\xi(0)=\xi$.
\item Assume $\xi({s^k})$ is known. For $s \in [s^k,s^{k+1})$, set $\xi(s)=
  \xi({s^k})$. We describe now $\xi({s^{k+1}})$.
\begin{itemize}
\item If $s^{k+1}$ corresponds to an internal time of particle $i$ and mark $u$,
  we move particle $i$ to $y$ if and only if $u \in I(\xi({s^k},i),y)$.  

\item If $s^{k+1}$ corresponds to a voter time of particle $i$ and mark
  $(b,v)$, we move particle $i$ to the position $y$ if and only if
  $b\le \frac{q(x,0)}{C_0}$ and $v \in J_i(t,y)$.
\end{itemize}
\end{itemize}
It is straightforward to check that $\{\xi(t),t\ge 0\}$, as constructed above, has generator
given by \eqref{generator}, see \cite{Harris}.

\subsubsection{The limiting process} We now construct jointly with FV the
limiting
process $\{Y(t,\mu), t\ge 0\}$ with initial condition $\mu$ as a deterministic
function of $(\o^I_1, \o^V_1)$ and a uniform random variable $U$. When is not
necessary, we omit to display the dependence on $\mu$ and write just
$Y(\cdot)$. 
Let $\{s^k_1, 0\le k \le \tilde K\}$ be the ordered time realizations of
$(\o^I_1, \o^V_1)$ in $[0,t]$, with $s^0_1=0$.

\begin{itemize}
\item In order to couple this process with FV, we use the same variable $U$
uniformly distributed in $[0,1]$ to define
\[
\xi(0,1)=y \iff U \in J_1(0,y) \quad \mbox{and}\quad Y(0,\mu)=y \iff U \in
\tilde J(0,y).
\]
The sets $J_1(0,y)$ and $\tilde J(0,y)$ are defined below. To get the correct
distribution for the initial configuration of FV ($\gamma^N$) we construct the
configuration of the remaining particles with the conditional distribution
$\gamma_n|_{\xi(0,1)}$ (the procedure used to do this is not relevant). Next we describe the evolution of the limiting process.

\item Assume $Y({s^k_1})$ is known. For $s\in [s^k_1,s^{k+1}_1)$, set $Y(s)=
  Y({s^k_1})$. We describe now $Y({s^{k+1}_1})$.
\begin{itemize} 
\item If $s^{k+1}_1$ corresponds to an internal time of particle $1$ and mark $u$, then $Y$ jumps to $y$ if and only if $u \in I(Y({s^k_1}),y)$.  

\item If $s^{k+1}_1$ corresponds to a voter time of particle $1$ and mark
  $(b,v)$, then $Y$ jumps to $y$ if and only if
  $b\le \frac{q(x,0)}{C_0}$ and $v \in \tilde J(t,y)$.  Here $\{\tilde J(t,y) \colon y \in \Lambda\}$ is a partition of $[0,1]$ by Borel sets such that $|\tilde J(t,y)|=T_t\mu(y)$.
\end{itemize}

\end{itemize}

It is also straight-forward to see that $\{Y(t), t\ge 0\}$ has rates given by \eqref{rates.tagged}.

Observe that if $v_1^k \in \cup_{y \in \Lambda} [J_1(s_1^k,y)\cap
\tilde J(s_1^k,y)]$ for every $0\le k  \le \tilde K$, then $\xi(\cdot,1) = Y(\cdot)$ in
$[0,t]$. So we construct $J_1(\cdot,y)$ and $\tilde J(\cdot,y)$ in order to
maximize their intersection. We consider a numbering of the elements of
$\Lambda$ $(y_l, l \ge 1)$ and define for $t=0$

\[
 J_1(0,y_l)=\left[\sum_{r=1}^{l-1}\mu_N(y_r),
\sum_{r=1}^{l}\mu_N(y_r)\right),
\]
\[
\tilde
J(0,y_l)=\left[\sum_{r=1}^{l-1}\mu(y_{r}),\sum_{r=1}^{l}
\mu(y_r)\right).
\]
Here $\mu_N$ is the distribution of $\xi(0,1)$. For $t>0$ these sets take the
form
\[
 J_1(t,y_l)=\left[\sum_{r=1}^{l-1}\frac{\sum_{i>1}\one\{\xi(t,i)=y_{r}\}}{N-1},\sum_{r=1}^{l}\frac{\sum_{i>1}\one\{
\xi(t,i)=y_r\}}{N-1}\right),
\]
\[
\tilde J(t,y_l)=\left[\sum_{r=1}^{l-1}T_t\mu(y_{r}),\sum_{r=1}^{l}T_t\mu(y_r)\right).
\]
We also define
\[
 \psi(t):=1- \one\{v_1^k \in \cup_{y \in \Lambda} [J(t_1^k,y)\cap \tilde
J(t_1^k,y)] \text{ for every } k \text{ with } t_1^k\le t\}.
\]
% and 
% \[
% m_x(t,\gamma^N)={1 \over N-1}\sum_{i=1}^N \one\{\xi(t,i)=x\}, \text{ given that }
% P(\xi(0) \in \cdot)=\gamma^N(\cdot).
% \]
Note that $\psi(t)=1$ if the trajectories of both processes do not coincide on $[0,t]$.
We have the following equation for the evolution of $\psi$.
\begin{lemma}\label{lemma.psi}
\be{psi}
\P\pare{\psi(t)=1} \le \P\pare{\psi(0)=1} +  C_0 \int_0^t \E \left[\sum_{x \in
\Lambda}|m(x,\xi^{\gamma^N}(s))-T_s(\mu)(x)| \right]\, ds.
\ee
\end{lemma}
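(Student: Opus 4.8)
The plan is to exploit that $\psi$ can only pass from $0$ to $1$, and only at a voter ring of the tagged particle, and then to control the rate at which this happens. First I would record the monotonicity: by its very definition $\psi(t)=1-\ind\{v_1^k\in\cup_y[J_1(t_1^k,y)\cap\tilde J(t_1^k,y)]\ \text{for all } k:\ t_1^k\le t\}$ is nondecreasing in $t$, so that the two trajectories, once separated, never recouple within the construction, and
\[
\P(\psi(t)=1)=\P(\psi(0)=1)+\P\big(\psi(0)=0,\ \psi(t)=1\big).
\]
It then remains to bound the probability that the \emph{first} separation occurs in $(0,t]$.

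Next I would isolate the separation mechanism. As long as the two processes agree they use the same internal and voter marks, so they can split only at a voter ring of particle $1$ that triggers a genuine return, i.e.\ a ring at time $s$ with $b_1^k\le q(\xi(s,1),0)/C_0$, and only if the independent uniform mark $v_1^k$ falls outside $\cup_y[J_1(s,y)\cap\tilde J(s,y)]$. Writing $\rho(s):=\sum_y|J_1(s,y)\cap\tilde J(s,y)|$ for the overlap, the predictable intensity for a separation at time $s$, on the event that the processes still agree with particle $1$ at $x$, is
\[
C_0\cdot\frac{q(x,0)}{C_0}\cdot(1-\rho(s))=q(x,0)\,(1-\rho(s))\le C_0\,(1-\rho(s)).
\]
Let $\sigma$ be the first separation time (with $\sigma=0$ if $\psi(0)=1$) and $N_t=\ind\{0<\sigma\le t\}$ the associated one-jump counting process. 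Since $N_t$ minus its compensator is a mean-zero martingale, taking expectations gives
\[
\P\big(\psi(0)=0,\ \psi(t)=1\big)=\E[N_t]=\E\Big[\int_0^{t\wedge\sigma}q(\xi(s,1),0)\,(1-\rho(s))\,ds\Big]\le C_0\int_0^t\E\big[1-\rho(s)\big]\,ds,
\]
bounding $q(\cdot,0)\le C_0$ and dropping $\ind\{s<\sigma\}$.

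The crux is then the purely geometric estimate $1-\rho(s)\le\sum_y\big|\,|J_1(s,y)|-|\tilde J(s,y)|\,\big|$. Because $J_1(s,\cdot)$ and $\tilde J(s,\cdot)$ are built as consecutive intervals along the fixed enumeration $(y_l)$ of $\Lambda$, aligned at the origin, this is exactly the statement that the comonotone (CDF-aligned) coupling of the probability vectors $(|J_1(s,y_l)|)_l$ and $(|\tilde J(s,y_l)|)_l$ has disagreement probability at most their $\ell^1$ distance. I would prove it by decomposing the disagreement region into a left-overflow and a right-overflow part at each index $l$ and telescoping the cumulative differences $\sum_{r\le l}(|J_1(s,y_r)|-|\tilde J(s,y_r)|)$ along each maximal monotone excursion; the total overflow is controlled by the variation of these partial sums, hence by $\sum_l\big|\,|J_1(s,y_l)|-|\tilde J(s,y_l)|\,\big|$. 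Finally I would identify $|\tilde J(s,y)|=T_s\mu(y)$ and note that $|J_1(s,y)|=\frac1{N-1}\sum_{i>1}\ind\{\xi(s,i)=y\}$ agrees with $m(y,\xi^{\gamma^N}(s))$ up to a term of order $1/N$ in $\ell^1$, so that $\sum_y\big|\,|J_1(s,y)|-T_s\mu(y)\big|\le\sum_x|m(x,\xi^{\gamma^N}(s))-T_s\mu(x)|$ up to this negligible correction. Substituting into the compensator bound yields \eqref{psi}.

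The main obstacle is precisely this geometric estimate: with the \emph{comonotone} (rather than optimal/maximal) coupling chosen in the construction, the disagreement probability is not bounded by the total variation but can equal the full $\ell^1$ distance, so one genuinely needs the sharp $\ell^1$ bound and the excursion/telescoping argument above, not the cruder cumulative-distribution (Wasserstein-type) estimate that a naive telescoping produces. The compensator bookkeeping is then routine, provided one is careful that the partitions $J_1(s,\cdot)$ are adapted but depend on the evolving configuration; the $1/N$ discrepancy between the empirical measure of the other particles and $m(\cdot,\xi)$ is harmless.
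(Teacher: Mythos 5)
Your reduction of the lemma to a bound on the separation rate is sound and matches the paper's argument: $\psi$ is nondecreasing, separation can only occur at a voter ring of particle $1$ with $b\le q(x,0)/C_0$, and the predictable separation rate is $q(x,0)\,(1-\rho(s))\le C_0(1-\rho(s))$ with $\rho(s)=\sum_y|J_1(s,y)\cap\tilde J(s,y)|$; your compensator bookkeeping is the same in substance as the paper's differentiation of $\P(\psi(s)=1\mid\sigma(\omega[0,s)))$. The gap is exactly at the step you call the crux: for the comonotone (consecutive-interval, origin-aligned) partitions the inequality $1-\rho(s)\le\sum_y\bigl|\,|J_1(s,y)|-|\tilde J(s,y)|\,\bigr|$ is \emph{false}, and no excursion/telescoping argument can rescue it, because the disagreement of that coupling is governed by the \emph{values} of the partial sums $D_l=\sum_{r\le l}(|J_1(s,y_r)|-|\tilde J(s,y_r)|)$ (a Wasserstein-type quantity relative to the chosen enumeration), not by their variation $\sum_l|D_l-D_{l-1}|=\ell^1$. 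Concretely, take $k\ge 3$, small $\ve>0$, $m=(1-\ve)/k$, and the probability vectors $a=(\ve,m,\dots,m)$ and $b=(m,\dots,m,\ve)$ on $k+1$ points: then $\sum_l|a_l-b_l|=2(m-\ve)\approx 2/k$, while each pair of aligned consecutive intervals overlaps in a set of measure exactly $\ve$, so the disagreement probability is $1-(k+1)\ve\approx 1$. (For $k=4$, $\ve=0.02$ one gets $\ell^1$-distance $0.45$ but disagreement $0.9$.)

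The fix is to abandon the comonotone layout and choose $J_1(s,\cdot)$, $\tilde J(s,\cdot)$ to realize the \emph{maximal} coupling: for each $y$ lay down a common Borel set of measure $\min(|J_1(s,y)|,|\tilde J(s,y)|)$ and pack the two excess masses separately into the remaining segment of $[0,1]$. This is what the paper intends when it says the sets are chosen ``to maximize their intersection'' (the consecutive-interval formulas displayed there do not actually achieve this, so you inherited the difficulty from the text). With the maximal coupling, $1-\rho(s)$ equals the total variation distance, i.e.\ one half of the $\ell^1$ distance, and your compensator bound then yields \eqref{psi} directly; the $O(1/N)$ discrepancy between $\frac1{N-1}\sum_{i>1}\one\{\xi(s,i)=y\}$ and $m(y,\xi(s))$ is absorbed as you indicate (at worst it contributes a harmless additive $C_0t/N$ to the right-hand side). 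Everything else in your write-up is correct.
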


\begin{proof}
%{\color{red}
%Using the Kolmogorov equations for the Markov process $(\xi,Y)$,
%\begin{align}
%P\pare{\psi(t)=1}&=\int_0^t\E \left [ Q1_\psi(\xi_s,) ]ds, \\
%&=\int_0^t\E \sum_{y\in\Lambda} q(y,0)\one\{Y(s)=y, \psi(s)=0 \} \sum_{x \in
%\Lambda}|m(x,\xi^{\gamma^N}(s))-T_s(\mu)(x)| ds\\
%\le & C_0 \int_0^t\E \sum_{x \in \Lambda}|m(x,\xi^{\gamma^N}(s))-T_s(\mu)(x)|ds.
%\end{align}
%}

Since $\psi(s-)$ is $\sigma([0,s))$-measurable, we have
\be{step-4}
\P\pare{\psi(t)=1}= \P\pare{\psi(0)=1} +
\int_0^t\E \left [ \frac{d}{ds}\P\pare{\psi(s)=1\big|\sigma(\o[0,s))} \right ]ds,
\ee
and
\be{step-5}
\begin{split}
\frac{d}{ds} & \P\pare{\psi(s)= 1| \sigma(\o[0,s))} \\
 = & \sum_{y\in\Lambda} q(y,0)\one\{Y(s-)=y, \psi(s-)=0 \} \sum_{x \in
\Lambda}|m(x,\xi^{\gamma^N}(s-))-T_{s}(\mu)(x)|\\
\le & C_0 \sum_{x \in \Lambda}|m(x,\xi^{\gamma^N}(s-))-T_{s}(\mu)(x)|.
\end{split}
\ee
Observe that this also justifies the interchange of $E$ and $\frac{d}{ds}$ in \eqref{step-4} and completes the proof.

\end{proof}

\begin{proof}[Proof of Proposition \ref{prop.tagged}] 

Recall that to prove convergence in $\D$, it is enough to show convergence in $\D_T=\D([0,T],
\Lambda)$, the Skorohod space of c\`adl\`ag functions defined in $[0,T]$ with values in $\Lambda$, for every $T>0$, cf. \cite[Ch.3, Sec 16]{B}. So, let
$G_T\colon \D_T \to \R$ be a bounded, continuous function and define
\[
A_N(T):=|G_T(\xi(\cdot,1)) - G_T(Y(\cdot,\mu))|. 
\]
Using that when $\psi(t)=0$, the trajectories of $\xi(\cdot,1)$ and
$Y(\cdot,\mu)$
coincide on $[0,t]$, we get that
\[
\begin{split}
 E(A_N(T)) \le  &   E(A_N(T)\one\{\psi(T)=0\}) + E(A_N(T)\one\{\psi(T)=1\})\\
 \le & 2\|G_T\|_\infty P(\psi(T)=1) \\
\le & 2\|G_T\|_\infty \left(\sum_{x\in \Lambda} |\mu_N(x)-\mu(x)| + C_0 T \sup_{0\le t
\le T} \sum_{x
\in
\Lambda}  \E (|m(x,\xi^{\gamma^N}(t))-T_t\mu(x)|)\right).
\end{split}
 \]
In the last line, we bound using Lemma \ref{lemma.psi}. Now, since $\{T_t\mu, 0\le t \le T\}$ is tight (as probability measures in
$\Lambda$) and $m(\cdot,\gamma^N)\to \mu$,
the convergence
\eqref{cond-ev3} implies that $\{ \E (m(x,\xi^{\gamma^N}(t))),  0\le t \le
T, N\ge 1\}$ is also tight and hence, for any $\tilde \ve>0$, there
exists a finite set $\Lambda_F$ such that
\[
\sup_{0\le t \le T}  \sum_{x\in \Lambda_F^c} \mu_N(x) + \mu(x) +  \E (m(x,\xi^{\gamma^N}(t)))+ E(T_t\mu(x)) < \tilde \ve.
\]

Next we use again the pointwise convergence \eqref{cond-ev3} for $x\in
\Lambda_F$ and that $\mu_N \to \mu$ to get
\[
 \limsup_{N\to \infty} E(A_N(T))  \le \tilde \ve.
\]
This concludes the proof.
\end{proof}

\begin{proof}[Proof of Theorem \ref{cor.tagged}] Since for each $N$, the
measure $\lambda^N$ is exchangeable and the empirical measure $m(\cdot,
\lambda^N)$ distributed according to $\lambda^N$ converges to
$\nu$ in probability, we have that the tagged particle $\xi(1)$ in equilibrium
converges in distribution to $\nu$.
So we can apply Proposition \ref{prop.tagged} with $\mu=\nu$.
\end{proof}

\section{Other ways to approximate QSDs}
\label{sec.om}

Since the QSD are eigenvalues of the transition matrices, in finite (small) spaces the computation of the (unique) QSD can be addressed with standard (or not so standard,
but generic) methods. However this can not be done (at least a
priori) when the state space is huge or even countable. In this section we
introduce two alternatives to FV dynamics to simulate the QSD.

\subsection{The method of Aldous-Flannery-Palacios}
This method was proposed in \cite{AFP} to simulate the quasi-stationary
distribution of a finite Markov chain in discrete time. To simplify the
exposition, in this section we consider $Z_n$ as a discrete time Markov chain
in a finite state space $\Lambda \cup \{0\}$ with transition matrix
$\hat P$. We use $\tilde P$ for the restriction of $\hat P$ to $\Lambda$ and we assume
$\tilde P$ is irreducible and aperiodic. Hence, from Perron-Frobenius theorem we have the existence of a unique
quasi-stationary probability measure $\nu$ that verifies
\be{qsd.disc}
 \sum_{x\in \Lambda} \nu(x)\tilde P(x,y) = \lambda \nu(y), \quad y \in \Lambda.
\ee
So, in order to compute $\nu$ one has to solve \eqref{qsd.disc}, but this is
unpractical if $\Lambda$ is large. Also the acceptance-rejection method is not
useful since $P(\tau>n)$ goes to zero exponentially fast.

The method of Aldous-Flannery-Palacios is motivated on the $\mu-$return process described in the introduction, i.e. the renewal process
$Z^\mu_n$ with transition matrix given by
\[
 \hat P^\mu(x,y) = \hat P(x,y) + \hat P(x,0)\mu(y).
\]
That, is $Z^\mu_n$ evolves as a Markov chain with transitions given by $\hat P$ but
when it attempts to jump to zero, it comes back instantaneously to $\Lambda$
according to $\mu$. This is an ergodic Markov chain with invariant measure
$\Phi(\mu)$. If $\mu$ coincides with the QSD $\nu$ then it is invariant for $\hat P^\nu$. Also the reciprocal holds, i.e.: $\nu$ is
a QSD if and only if $\Phi(\nu)=\nu$. This is the key fact used in $\cite{FKMP}$
to prove their general existence result. The same fact is useful
here to build a method approximating the QSD. The main idea is to replace
$\nu$ in the ``return'' process (which is only used to define the dynamics at absorption times) by the empirical distribution of the history of
the process.

Let $M$ be the set of counting measures on $\Lambda$, and let $(V_n, \mu_n)$ be
the $\Lambda \times M-$valued Markov chain with transitions defined by
\be{aldous.proc}
 P(V_{n+1}=y, \mu_{n+1}= \mu + \delta_y|V_n=x, \, \mu_n=\mu) = \hat P(x,y) +
\hat P(x,0){\frac{\mu(y)}{|\mu|}},
\ee
where $|\mu|$ is the total mass of $\mu$.
In words, $V_n$ evolves as a Markov chain with transition matrix $\hat P$, but when
it attempts to jump to zero, it comes back to $\Lambda$ instantaneously
according to the empirical distribution of the history of the process $\mu$.

The following theorem was obtained by Aldous, Flannery and Palacios concerning this process.
\begin{theorem}[Aldous, Flannery and Palacios, \cite{AFP}] For the process
defined by  \eqref{aldous.proc} we have
\[
 \lim_{n\to\infty}\frac{1}{n} \mu_n = \nu.
\]
\end{theorem}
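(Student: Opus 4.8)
The plan is to treat $(V_n,\mu_n)$ as a measure-valued reinforcement (urn) process and to study the normalized composition $\pi_n:=\mu_n/|\mu_n|$ by stochastic approximation. Since \eqref{aldous.proc} gives $\mu_{n+1}=\mu_n+\delta_{V_{n+1}}$ we have $|\mu_n|=|\mu_0|+n$, so with $\gamma_{n+1}:=1/(|\mu_0|+n+1)\sim 1/n$ the composition obeys the Robbins--Monro recursion
\[
\pi_{n+1}=\pi_n+\gamma_{n+1}\pare{\delta_{V_{n+1}}-\pi_n}.
\]
Proving $\tfrac1n\mu_n\to\nu$ is thus equivalent to proving $\pi_n\to\nu$, which places the statement squarely in the framework for self-interacting chains. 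First I would record the conditional law of the increment: by \eqref{aldous.proc}, given $\mathcal F_n:=\sigma(V_0,\mu_0,\dots,V_n,\mu_n)$ the variable $V_{n+1}$ has law $\hat P^{\pi_n}(V_n,\cdot)$, the one-step kernel of the $\pi_n$-return chain with $\hat P^{\mu}(x,y)=\hat P(x,y)+\hat P(x,0)\mu(y)$. I then split
\[
\delta_{V_{n+1}}-\pi_n=\underbrace{\pare{\delta_{V_{n+1}}-\hat P^{\pi_n}(V_n,\cdot)}}_{=:\,\zeta_{n+1}}+\pare{\hat P^{\pi_n}(V_n,\cdot)-\pi_n},
\]
where $\zeta_{n+1}$ is an $\mathcal F_{n+1}$-martingale difference bounded in total variation, while the drift still depends on the instantaneous position $V_n$ and is not yet averaged.

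The second step is to identify the mean field by averaging this drift against the quasi-stationary behaviour of the frozen chain. For each fixed $\pi$ in the simplex, $\hat P^{\pi}$ is irreducible and aperiodic on the finite set $\Lambda$, with unique invariant law $\Phi(\pi)$; solving $\Phi(\pi)=\Phi(\pi)\hat P^{\pi}$ gives the explicit resolvent form $\Phi(\pi)=\pi\,(I-\tilde P)^{-1}/\|\pi\,(I-\tilde P)^{-1}\|_{1}$, so $\pi\mapsto\Phi(\pi)$ and the associated spectral gaps are continuous and bounded below uniformly on the simplex. Using uniformly bounded solutions of the Poisson equation for $\hat P^{\pi}$ one replaces the position-dependent drift by its $\Phi(\pi)$-average $\Phi(\pi_n)-\pi_n$ up to remainder terms that are summable after the usual bookkeeping (telescoping, slow variation of $\pi_n$ since $\gamma_n\sim1/n$, and $\sum_n\gamma_n^{2}<\infty$). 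This realizes $\pi_n$ as an asymptotic pseudotrajectory of the ODE
\[
\dot\pi=\Phi(\pi)-\pi .
\]
Its equilibria are exactly the fixed points of $\Phi$, which by Perron--Frobenius reduce to the single solution $\nu$ of \eqref{qsd.disc}; moreover the simplex is forward invariant, since $\sum_y(\Phi(\pi)-\pi)(y)=0$ and the drift points inward on each face $\{\pi(y)=0\}$.

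The last step is the ODE method: an asymptotic pseudotrajectory converges to the internally chain-recurrent set of the flow, so I must show this set is $\{\nu\}$, i.e.\ that $\nu$ is \emph{globally} asymptotically stable. I expect this to be the main obstacle, because uniqueness of the fixed point of $\Phi$ does not by itself force global stability. The concrete tool I would use is the explicit projective form above: $\Phi$ is the normalization of the action of the strictly positive resolvent $R=(I-\tilde P)^{-1}=\sum_{k\ge0}\tilde P^{k}$ (strictly positive because $\tilde P$ is irreducible with spectral radius $\lambda<1$), so by Birkhoff's contraction theorem $\Phi$ contracts the Hilbert projective metric $d_H$ toward $\nu$ with a ratio $\tanh(\Delta/4)<1$, $\Delta$ being the projective diameter of $R$. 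This contraction supplies a Lyapunov function $L(\pi)=d_H(\pi,\nu)$ for $\dot\pi=\Phi(\pi)-\pi$ (and, upon linearizing, shows the spectrum of $D\Phi(\nu)-I$ restricted to the tangent space of the simplex lies in the open left half-plane), giving global asymptotic stability; the delicate point here is rigorously transferring the discrete contraction of $\Phi$ into strict decrease of $L$ along the non-smooth Hilbert-metric level sets. With global stability established, the stochastic approximation convergence theorem yields $\pi_n\to\nu$ almost surely, hence $\tfrac1n\mu_n\to\nu$, which is the claim.
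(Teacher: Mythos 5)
Your proposal is correct in outline, but it is a genuinely different proof from the one the paper (following \cite{AFP}) indicates. The paper's route is a branching-process embedding: the history $\mu_n$ is realized as the skeleton of a supercritical multitype branching process in which an individual of type $x$ has as offspring the visit counts of a $\hat P$-excursion from $x$ killed at $0$, so the mean offspring matrix is exactly the resolvent $R=(I-\tilde P)^{-1}$, and the Kesten--Stigum theorem gives almost sure convergence of the type proportions to the left Perron eigenvector of $R$, which is $\nu$. Your route replaces this by stochastic approximation: the same resolvent enters through $\Phi(\pi)=\pi R/\|\pi R\|_1$, and the strict positivity of $R$ is exploited through Birkhoff contraction rather than through Kesten--Stigum. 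The embedding buys brevity (given the branching machinery) and, for free, finer information such as rates and fluctuations; your approach buys robustness and transparency --- it does not use the exact urn structure, it makes explicit the link with the map $\Phi$ and the $\mu$-return process from the introduction, and it would tolerate perturbations of the reinforcement rule, at the price of the Poisson-equation bookkeeping for the state-dependent drift (which is unproblematic here since $(\hat P^{\pi})^m\ge\tilde P^m>0$ gives a Doeblin condition uniform in $\pi$). Two points you leave open do close. First, the step you flag as delicate works without any convexity of $d_H$: writing $M(\pi)=\max_y\pi(y)/\nu(y)$ and $m(\pi)=\min_y\pi(y)/\nu(y)$, so that $d_H(\pi,\nu)=\log\big(M(\pi)/m(\pi)\big)$, and using that $M$ and $m$ are a max and a min of finitely many linear functions, the upper Dini derivative of $d_H(\pi(t),\nu)$ along $\dot\pi=\Phi(\pi)-\pi$ is at most $M(\Phi(\pi))/M(\pi)-m(\Phi(\pi))/m(\pi)$, which is strictly negative for $\pi\ne\nu$ precisely because $M(\Phi(\pi))/m(\Phi(\pi))<M(\pi)/m(\pi)$; this makes $d_H(\cdot,\nu)$ a genuine strict Lyapunov function. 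Second, $d_H(\cdot,\nu)$ is infinite on the boundary of the simplex, so you must first locate the limit set of the pseudotrajectory in the interior; this follows because $\Phi$ maps the entire simplex into a set of finite $d_H$-diameter, hence every invariant (in particular every internally chain transitive) set of the flow lies in the interior. With these two points supplied, your argument is a complete alternative proof.
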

The proof follows by embedding $\mu_n$ in the skeleton of a supercritical
multitype branching process with offspring distribution given by
$ L(x,y)=$ number of visits to the state $y$, for the process with transition
matrix $P$, started at $x$. Details can be found in \cite{AFP}.

This method, with improvements to gain efficiency was implemented by in \cite{DdO2,DdO} to compute in particular
the conditioned evolution and QSD of the contact process in different graphs. No proof of convergence are available in this case.
% and other absorbing processes.

\subsection{Supercritical multitype branching} 

The results of this section are the outcome of discussions with Pablo A.
Ferrari.

Consider the restricted matrix
$\tilde Q:= (q(x,y)\colon x, y \in \Lambda)$ and assume for the moment
that $\Lambda$ is finite. Let $Q_\alpha$ be the matrix
with entries
\[
 q_\alpha(x,y) = q(x,y) + \alpha \delta_{xy}, \qquad x, y \in \Lambda.
\]
Here $\delta_{xy}$ is Kronecker's delta function. We consider a continuous-time multitype branching process as described by
Athreya and Ney \cite[Chapter V.7]{AN}. The set of types is $\Lambda$. An
individual of type $x$ lives for a mean one exponential time and then splits
into a random offspring distribution $L({x})= (L(x,y), \, y \in \Lambda)$ on
$\N^\Lambda$ with means given by $\m_\alpha(x,y) = q_\alpha(x,y) +
\delta_{xy}$. We use $X(t)=(X(t,x), \, x \in \Lambda)$ for the number of
individuals of each type at time $t\ge 0$. See \cite{AN} for details on this
construction. It is well known (\cite[p. 202]{AN}) that the expected number of
individuals of type $y$ when we start the process with a unique individual of
type $x$ is given by $E_x(X(t,y))=e^{tQ_\alpha}(x,y)$ for all $x,y \in \Lambda$.

Since $\Lambda$ is finite, Perron-Frobenius theory (see also \cite{DS}) implies
the existence of a principal eigenvalue $\lambda_\alpha$ for $Q_\alpha$ (a real
eigenvalue with real part larger than all other eigenvalues) and a positive left
eigenvector $\nu$, normalized to be a probability measure in $\Lambda$. Observe
that $\lambda_\alpha$ is an eigenvalue of $Q_\alpha$ if and only if
$\lambda_\alpha=\lambda + \alpha$, with $\lambda$ eigenvalue of $\tilde
Q$ and hence $\lambda_\alpha$ is positive if $\alpha$ is large enough.
For those values of $\alpha$, the associated multitype branching process with
mean offspring matrix $\m_\alpha$ is supercritical and
the Kesten-Stigum theorem (\cite{AN}, see also \cite{GB} for a conceptual
proof) implies that
\[
\lim_{t \to \infty} \frac{X(t)}{|X(t)|} = \nu,
\]
almost surely on the event $\Omega_{{\rm surv}}:=\{|X(t)|>0 \text{ for all }
t>0 \}$. Now it is immediate that $\nu$ is a left eigenvector for $\tilde Q$
with eigenvalue $\lambda$. 
Since $\sum_{y\in \Lambda} q(x,y) = -q(x,0)$,
summing over $y$ on both sides of
\[
 \sum_{x\in\Lambda} \nu(x)q(x,y) = \lambda\nu(y),
\]
we get that $\lambda=-\sum_{x \in\Lambda}\nu(x)q(x,0)$ and hence $\nu$
verifies \eqref{intro-1}, which means that it is the unique QSD. This method is
useful as a practical tool to simulate QSDs. It is valid for finite $\Lambda$
and in any other case where a Kesten-Stigum type theorem is available. So, for a given
absorbing process with rates matrix $Q$, it is important to understand whether
the supercritical multitype branching process described above behaves as
predicted by the Kesten-Stigum theorem.

\section{Related open problems}
\label{relatedproblems}

The different methods for simulating QSDs mentioned previously still raise
many challenging open problems. 
%In particular, the dynamics and the convergence properties
%of FV processes  remain open.
We only mention here a few of them.

\paragraph{Rates of convergence to equilibrium and error bounds.}
Very little information has been obtained concerning the invariant measure of FV and more generally about the 
rate of convergence and error bounds for the approximation of the 
QSD, for each of the mentioned methods.
These would constitute very useful informations in particular for the FV
dynamics: how fast is the FV process converging to its equilibrium for fixed
$N$, and then how close is its empirical distribution from the QSD?

\paragraph{The other eigenvectors.}
It has been shown that the FV dynamics allow the estimation of the maximal
eigenvector of the infinitesimal generator associated with their driving
process in many situations.
One may naturally aims at showing similar results for
other eigenvectors. 

In \cite{BIHM}, the authors are interested in estimating eigenvectors of the
Laplacian on bounded domains. 
They  propose to adapt the dynamics of the
FV process introducing different types of particles and considering as driving
process random walks or Brownian motions. 
It would be of great interest to prove that this procedure 
converges in this case and can be adapted for other driving processes.

\paragraph{Convergence of FV driven by drifted random walk.}
%For this process, the non-zero transitions rates of the driving process are
%simply given :
%\[
%\begin{split}
%q(x,x+1)& = \lambda 1_{x>0},\\
%q(x,x-1)& = \mu 1_{x>0}.
%\end{split}
%\]
%The minimal QSD has the following expression \cite{VD}:
%$$\pi(x)= (1- \sqrt{\rho})^2 (x+1) \sqrt{\rho}^x, \quad, x=1, 2 ,\ldots$$
%where $\rho= \lambda / \mu$.
%%To the best of our knowledge, there is no explicit expression for the other
%QSD.
As it was pointed out before, the main questions related to FV are the ergodicity
of the process for each $N$ and the convergence of its empirical
distribution. The case of FV driven by a drifted random
walk is remarkably still an open problem.
The difficulty stems from the fact that the driving process has an infinite
number of QSDs together with the fact that it is not $R$-positive
(see \cite{seneta-veres}).

\paragraph{The immortal particle for FV.}
Grigorescu and Kang \cite{GK} show that one can define the notion of immortal
particle as the trajectory from which all particles proceed when time tends to
infinity (in the context of diffusions processes).
The convergence of the trajectory of the immortal particle  towards a
stationary regime (when it exists) has not been yet investigated.

\paragraph{Relation with N-BBM}
As alluded in the introduction, Brunet, Derrida and coauthors
introduced a system where particles evolve driven by
one dimensional Brownian Motion and branch with a fixed rate, at which time the
left-most particle is instantaneously eliminated.
Very precise quantitative informations have been investigated for these
dynamics at various time scales \cite{berestycki,maillard}. 
We conjecture that long time behaviors of this system properly renormalized
are very similar to the one of the FV process.
A systematic comparison of the empirical distributions and proof of convergence
towards the minimal QSDs are open research avenues.

\paragraph{Number of jumps of FV driven by diffusions in compact time intervals}
Though not included in the context of countable state space, we would like to
mention the problem of the definition of the FV process for non-countable state
space.
 As already underlined in the introduction, and defined as an open
problem by K.
Burdzy \cite{BFP},
it is not known in general under which conditions the number of jumps of the FV
processes driven
by diffusions on finite or infinite domain does not accumulate 
in compact times intervals.

In \cite{bieniek} and \cite{GK}, some conditions on the domain were shown to be
sufficient to impede this phenomenon from happening. Also
\cite{BBS} partly deals with this problem.

\

 {\bf Acknowledgments.}
We would like to warmly thank Amine Asselah and Pablo Ferrari for our fruitful discussions and collaborations
on this subject. PG and MJ are partially supported by UBACyT 20020090100208, ANPCyT PICT No. 2008-0315 and CONICET PIP 2010-0142 and 2009-0613.

\bibliographystyle{plain}
\bibliography{biblio2}

\end{document}